\theoremstyle{theorem}
\newtheorem{theorem}{Theorem}[section]
\newtheorem{proposition}[theorem]{Proposition}
\newtheorem{corollary}[theorem]{Corollary}
\theoremstyle{definition}
\newtheorem{lemma}[theorem]{Lemma}
\newtheorem{example}[theorem]{Example}
\theoremstyle{remark}
\newtheorem{remark}{Remark}
\theoremstyle{claim}
\theoremstyle{theoremA}
\newtheorem*{theoremA}{Theorem A}
\theoremstyle{theoremB}
\newcommand{\Div}{\mbox{div}_M}
\newcommand{\real}{\mathbb{R}}
\newcommand{\hy}{\mathbf{H}}
\newcommand{\Sp}{\mathbf{S}}
\newcommand{\Si}{\Sigma}
\newcommand{\si}{\sigma}
\newcommand{\te}{\theta}
\newcommand{\al}{\alpha}
\newcommand{\om}{\omega}
\newcommand{\na}{\nabla}
\newcommand{\ep}{\epsilon}
\newcommand{\ga}{\gamma}
\newcommand{\Ga}{\Gamma}
\newcommand{\be}{\beta}
\newcommand{\De}{\Delta}
\newcommand{\de}{\delta}
\newcommand{\vp}{\varphi}
\newcommand{\vol}{\text{\rm vol}}
\newcommand{\lan}{\langle}
\newcommand{\ran}{\rangle}
\newcommand{\p}{\partial}
\newcommand{\dv}{\mathrm{div}}
\newcommand{\hs}{\mathrm{Hess}}
\newcommand{\tr}{\mathrm{Tr\,}}
\newcommand{\rad}{\mathrm{rad}}
\newcommand{\m}{\mathcal}
\newcommand{\II}{{I\!I}}
\newcommand{\spp}{\mathrm{supp}\,}
\newcommand{\La}{\Lambda}
\title[The Caffarelli-Kohn-Nirenberg inequality on  submanifolds]{The Caffarelli-Kohn-Nirenberg inequality for submanifolds in Riemannian manifolds}
\author{M. Batista}
\address{Instituto de Matem\'atica, Universidade Fe\-deral de Alagoas, Macei\'o, AL, CEP 57072-970, Brazil}\email{mhbs@mat.ufal.br}
\author{H. Mirandola}
\address{Instituto de Matem\'atica, Universidade Fe\-deral do Rio de Janeiro, Rio de Janeiro, RJ, CEP 21945-970, Brasil} \email{mirandola@ufrj.br}
\author{F. Vit\'orio}
\address{Instituto de Matem\'atica, Universidade Fe\-deral de Alagoas, Macei\'o, AL, CEP 57072-970, Brazil}\email{feliciano.vitorio@gmail.com}
\subjclass[2000]{Primary 53C21; Secondary 53C42}
\thanks{The authors are supported by CNPq.}
\keywords{Caffarelli-Kohn-Nireberg type inequality, Hardy type inequality, Submanifolds}
\begin{document}
\maketitle
\begin{abstract} After works by Michael and Simon \cite{MS}, Hoffman and Spruck \cite{HS}, and White \cite{W}, the celebrated Sobolev inequality could be extended to submanifolds in a huge class of Riemannian manifolds. The universal constant obtained depends only on the dimension of the submanifold. A sort of applications to the submanifold theory and geometric analysis have been obtained from that inequality. It is worthwhile to point out that, by a Nash Theorem,  every Riemannian manifold can be seen as a submanifold in some Euclidean space. In the same spirit, Carron obtained a Hardy inequality for submanifolds in Euclidean spaces.
In this paper, we will prove the Hardy, weighted Sobolev and Caffarelli-Kohn-Nirenberg inequalities, as well as some of their derivatives, as Galiardo-Nirenberg and Heisenberg-Pauli-Weyl inequalities, for submanifolds in a class of manifolds, that include,  the Cartan-Hadamard ones.
\end{abstract}

\tableofcontents

\section{Introduction}

Over the years, geometers have been interested in understanding how integral inequalities imply geometric or topological obstructions on Riemannian manifolds. Under  this purpose,  some integral inequalities lead us to study positive solutions to critical singular quasilinear elliptic problems, sharp constants, existence, non-existence and symmetry results for extremal functions on subsets in the Euclidean space. About these subjects, one can read, for instance, \cite{BT}, \cite{C}, \cite{CC}, \cite{CKN}, \cite{CW}, \cite{HS}, \cite{MS}, \cite{KO} and references therein.

In the literature, some of the most known integral inequalities are the Hardy inequality, Gagliardo-Nirenberg inequality, and, more generally, the Caffarelli-Kohn-Nirenberg inequality. 
These inequalities imply comparison for the  volume growth, estimates of the essencial spectrum for the Schr\"{o}dinger operators, parabolicity,  among others properties (see, for instance, \cite{L, dCX, X}).

In this paper, we propose to study the Caffarelli-Kohn-Nirenberg (CKN) inequality for submanifolds in a class of Riemannian manifolds that includes, for instance, the Cartan-Hadamard manifolds, using an elementary and very efficient approach. We recall that a Cartan-Hadamard manifold is a complete simply-connected Riemannian manifold with nonpositive sectional curvature.  Euclidean and hyperbolic spaces are 
 the simplest examples of Cartan-Hadamard manifolds.
\section{Preliminaries}

In this section, let us start recalling some concepts, notations and basic properties about submanifolds. First, let $M=M^k$ be a $k$-dimensional Riemannian manifold with (possibly nonempty) smooth boundary $\p M$. Assume $M$ is isometrically immersed in a complete Riemannian manifold $\bar M$. Henceforth, we will denote by $f:M\to \bar M$ the isometric immersion. In this paper, no restriction on the codimension of $f$ is required. By abuse of notation, sometimes we will identify $f(x)=x$, for all $x\in M$. 
Let $\lan\cdot,\cdot\ran$ denote the Euclidean metric on $\bar M$ and consider the same notation to the metric induced on $M$. Associated to these metrics, consider the Levi-Civita connections $D$ and $\na$ on $\bar M$ and $M$, respectively. It easy to see that $\nabla_Y Z = (D_YZ)^\top,$ where $\top$ means the orthogonal projection onto the tangent  bundle $TM$. The Gauss equation says $$D_Y Z = \nabla_YZ + \II(Y,Z),$$
where $\II$ is a quadratic form named by  {\it second fundamental form}. The mean curvature vector is defined by $H=\tr_M \,\II$.

Let $\m K:[0,\infty)\to [0,\infty)$ be a nonnegative continuous function and  $h\in C^2([0,+\infty))$ the solution of the Cauchy problem:
\begin{equation}\label{cauchy-h}
\begin{array}{l}
h''+ \m K h = 0, \\
h(0)=0, \ h'(0)=1.
\end{array}
\end{equation} 
Let $0<\bar r_0=\bar r_0(\m K)\le +\infty$ be the supremum value where the restriction $h|_{[0,\bar r_0)}$ is increasing and let $[0,\bar s_0)=h([0,\bar r_0))$. Notice that $h'$ is non-increasing since $h''=-\m K h\le 0$.

\begin{example}\label{mK-constant} If $\m K=b^2$, with $b\ge 0$, then \begin{enumerate}[(i)] \item\label{ambient-b=0} if $b=0$, it holds $h(t)=t$ and $\bar r_0=\bar s_0=+\infty$; 
\item\label{ambient-b>0} if $b>0$,  it holds $h(t)=\frac{1}{b}\sin(bt)$ and $\bar r_0=\frac{\pi}{2b}$ and $\bar s_0=h(\bar r_0)=\frac{1}{b}$.
\end{enumerate} \end{example}

For $\xi\in \bar M$,  let $r_{\xi}=d_{\bar M}(\cdot\,,\xi)$ be the distance function on $\bar M$ from $\xi\in \bar M$. In this paper, we will deal with complete ambient spaces $\bar M$  whose radial sectional curvature satisfies  $(\bar K_\rad)_{\xi_0}\le \m K(r_{\xi_0})$, for some fixed $\xi_0\in \bar M$. Let us recall the definition of radial sectional curvature. Let $x\in \bar M$ and, since $\bar M$ is complete, let $\ga:[0,t_0=r_{\xi}(x)]\to \bar M$ be a minimizing geodesic in $\bar M$ from $\xi$ to $x$. For all  orthonormal pair of vectors $Y,Z\in T_x\bar M$ we define $(\bar K_\rad)_{\xi}(Y,Z)=\lan \bar R(Y,\ga'(t_0))\ga'(t_0), Z\ran$. 

\begin{example}\label{example-warped} Let $(P,d\si^2_P)$ be a complete manifold. Consider the manifold $\bar M=[0,r_0)\times P/\sim$, where $(0,y_1) \sim (0,y_2)$, for all $y_1$, and $y_2\in P$, with the following metric:
\begin{equation}\label{warped-metric} \lan \cdot\,,\cdot\ran_{\bar M} = dr^2 + h(r)^2 d\si^2_P.
\end{equation}
Since $h>0$ in $(0,r_0)$, $h(0)=0$ and $h'(0)=1$, it follows  that $\bar M$ defines a Riemannian manifold. If $P=\Sp^{n-1}$ is the round metric, $\lan\,,\ran_{\bar M}$ is called a rotationally invariant metric.  

We fix the point $\xi_0=(0,y)\in \bar M$. The distance $d_{\bar M}((r,y), \xi_0)=r$, for all $(r,y)\in \bar M$.
The curvatura tensor $\bar R$ of $\bar M$ satisfies
\begin{equation} 
\bar R(Y,\p_r)\p_r = \left\{
\begin{array}{l}
-\frac{h''(r)}{h} Y, \mbox{ if } Y \mbox{ is tangent to } P;\\
0, \mbox{ if } Y=\p_r.
\end{array}
\right.
\end{equation}
Hence, the radial sectional curvature $(\bar K_{\rad})_{\xi_0}(\cdot\,,\cdot)=\lan \bar R(\cdot\,,\p_r)\p_r,\cdot\ran$, with basis point $\xi_0$, satisfies $(\bar K_{\rad})_{\xi_0}=\m K(r)$. 

A huge class of metrics are rotationally symmetric:  (i) The Euclidean metric: $\lan\,,\ran_{\real^n}=dr^2+r^2 d\si^2_{\Sp^{n-1}}$, in $[0,\infty)\times \Sp^{n-1}$. (ii)  The spherical metric $\lan\,,\ran_{\Sp^n}=dr^2 + \sin^2(r) d\si^2_{\Sp^{n-1}}$, in $[0,\pi]\times \Sp^{n-1}$. (iii) The Hyperbolic metric: $\lan\,,\ran_{\hy^n}=dt^2+\sinh^{2}(r) d\si^2_{\Sp^{n-1}}$, in $[0,\infty)\times \Sp^{n-1}$; (iv) Some classical examples in general relativity: Schwarzchild metric, De Sitter-Schwarzchild metric, Kottler-Schwarzchild metric, among others.
\end{example}

Assume the radial sectional curvatures of $\bar M$ satisfies $(\bar K_{\rad})_{\xi_0}\le \m K(r)$, where $r=r_{\xi_0}=d_{\bar M}(\cdot\,,\xi_0)$. We fix $0<r_0 < \min\{\bar r_0(\m K),Inj_{\bar M}(\xi_0)\}$ and consider the geodesic ball $\m B=\m B_{r_0}(\xi_0)=\{x\in \bar M \mid d_{\bar M}(x,\xi_0)<r_0\}$.  It follows that $r$ is differentiable at all points in $\m B^*=\m B\setminus \{\xi_0\}$ and, 
by the Hessian comparison theorem (see Theorem 2.3 page 29 of \cite{PRS}), we have 
\begin{equation}\label{hes-comp}
\hs_{\,r}(v,v) \ge \frac{h'(r)}{h(r)}(1-\lan \bar\na r,v\ran^2),
\end{equation} 
for all points in $\m B^*$ and vector fields $v:\m B^*\to T\bar M$ with $|v|=1$. 

For a vector field $Y:M\to T\bar M$, the  divergence of $Y$ on $M$ is given by
$$\Div Y = \sum_{i=1}^k \langle D_{e_i}Y, e_i\rangle,$$
where $\{e_1,\cdots, e_k\}$ denotes a local orthonormal frame on $M$.  By simple computations, one has 

\begin{lemma}\label{prop} Let $Y:M\to T\bar M$ be a vector field and $\psi \in C^1(M)$. The following items hold
\begin{enumerate}[(a)]
\item\label{prop-a} $\Div Y = \Div Y^\top - \langle \vec{H}, Y\rangle;$
\medskip
\item\label{prop-b} $\Div (\psi Y) = \psi~\Div Y + \lan \na^M\psi, Y\rangle$.
\end{enumerate}
\end{lemma}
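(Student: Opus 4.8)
The plan is to prove both identities by a direct pointwise computation at an arbitrary point $p\in M$, working with a local orthonormal frame $\{e_1,\dots,e_k\}$ of $TM$ near $p$ and splitting everything according to the orthogonal decomposition of $T\bar M|_M$ into tangent and normal parts, $Y=Y^\top+Y^\perp$.

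For item (\ref{prop-a}), I would first use the linearity of the ambient connection $D$ to write $\Div Y=\sum_{i}\langle D_{e_i}Y^\top,e_i\rangle+\sum_{i}\langle D_{e_i}Y^\perp,e_i\rangle$. For the tangential piece, the Gauss equation $D_{e_i}Y^\top=\na_{e_i}Y^\top+\II(e_i,Y^\top)$ together with the fact that $\II$ takes values in the normal bundle gives $\langle D_{e_i}Y^\top,e_i\rangle=\langle \na_{e_i}Y^\top,e_i\rangle$, so that this sum is exactly the intrinsic divergence $\Div Y^\top$. For the normal piece, the key trick is to differentiate the orthogonality relation $\langle Y^\perp,e_i\rangle=0$: applying $e_i$ yields $\langle D_{e_i}Y^\perp,e_i\rangle=-\langle Y^\perp,D_{e_i}e_i\rangle$, and since only the normal component of $D_{e_i}e_i$ survives the pairing with $Y^\perp$, this equals $-\langle Y^\perp,\II(e_i,e_i)\rangle$. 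Summing over $i$ and recalling $\vec H=\tr_M\,\II=\sum_i\II(e_i,e_i)$ gives $\sum_i\langle D_{e_i}Y^\perp,e_i\rangle=-\langle \vec H,Y^\perp\rangle$. Finally, because $\vec H$ is normal one has $\langle \vec H,Y^\perp\rangle=\langle \vec H,Y\rangle$, which yields (\ref{prop-a}).

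For item (\ref{prop-b}), I would simply apply the Leibniz rule $D_{e_i}(\psi Y)=(e_i\psi)Y+\psi\,D_{e_i}Y$ inside the definition of the divergence. Pairing with $e_i$ and summing, the second term reproduces $\psi\,\Div Y$, while the first term gives $\sum_i(e_i\psi)\langle Y,e_i\rangle$; since $\sum_i(e_i\psi)e_i=\na^M\psi$ and $\na^M\psi$ is tangent, this sum equals $\langle \na^M\psi,Y\rangle$, which is the claim.

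Neither identity presents a genuine obstacle; both are bookkeeping exercises once the Gauss equation and the tangent/normal splitting are in hand. The only point requiring a little care --- and the step I would write out most carefully --- is the normal term in (\ref{prop-a}): one must differentiate the orthogonality condition rather than attempt to differentiate $Y^\perp$ directly, and one must track that the mean curvature enters with the correct sign and that $\langle \vec H,Y\rangle$ may legitimately replace $\langle \vec H,Y^\perp\rangle$ precisely because $\vec H$ is normal.
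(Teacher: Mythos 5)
Your proof is correct, and it is exactly the standard computation the paper omits when it says ``By simple computations, one has'': split $Y=Y^\top+Y^\perp$, use the Gauss formula on the tangential part, differentiate the orthogonality relation $\lan Y^\perp,e_i\ran=0$ to produce $-\lan \vec H,Y\ran$ from the normal part, and apply the Leibniz rule for item (b). Nothing is missing; the sign of the mean curvature term and the replacement of $\lan\vec H,Y^\perp\ran$ by $\lan\vec H,Y\ran$ are handled correctly.
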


From now on, we will  consider the radial vector field $X=X_{\xi_0}=h(r)\bar\na r$, defined in $\m B^*$. Notice that $|X|=h(r)>0$ everywhere in $\m B^*$.

\begin{lemma}\label{formula}
For all $\al\in (-\infty, +\infty)$, it holds 
\begin{equation*}
\dv_M(\frac{X}{|X|^\al}) \ge h'(r)[\frac{k-\al}{{h(r)}^\al} + \al~\frac{|\bar\na r ^\perp|^2}{{h(r)}^\al}],
\end{equation*}
in $M\cap \m B^*$. Here,  $(\cdot)^\perp$ denotes the orthogonal projection on the normal bundle $TM^\perp$ of $M.$
\end{lemma}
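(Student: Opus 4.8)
The plan is to reduce everything to the Hessian comparison \eqref{hes-comp} through the product rule of Lemma \ref{prop}. First I would observe that, since $|\bar\na r|=1$, one has $|X|=h(r)$, so the field in question simplifies to $X/|X|^\al=h(r)^{1-\al}\,\bar\na r$. Setting $\psi=h(r)^{1-\al}$, which is strictly positive on $\m B^*$ because $h>0$ there, and $Y=\bar\na r$, Lemma \ref{prop}(b) gives
$$\dv_M\!\left(\frac{X}{|X|^\al}\right)=\psi\,\dv_M(\bar\na r)+\lan\na^M\psi,\bar\na r\ran.$$

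For the first term I would use that the $\bar M$-Hessian of $r$ is $\hs_r(Y,Z)=\lan D_Y\bar\na r,Z\ran$, so for a local orthonormal frame $\{e_1,\dots,e_k\}$ of $M$ one has $\dv_M(\bar\na r)=\sum_i\hs_r(e_i,e_i)$. Applying \eqref{hes-comp} to each unit vector $e_i$ and summing yields
$$\dv_M(\bar\na r)\ge\frac{h'(r)}{h(r)}\Big(k-\sum_{i=1}^k\lan\bar\na r,e_i\ran^2\Big)=\frac{h'(r)}{h(r)}\big(k-|\bar\na r^\top|^2\big),$$
where $\bar\na r^\top$ is the component of $\bar\na r$ tangent to $M$. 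The orthogonal splitting $1=|\bar\na r|^2=|\bar\na r^\top|^2+|\bar\na r^\perp|^2$ then rewrites the bound as $\tfrac{h'(r)}{h(r)}(k-1+|\bar\na r^\perp|^2)$, and multiplying by $\psi=h(r)^{1-\al}>0$ preserves the inequality.

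For the second term, since the $M$-gradient of the restriction of $r$ is exactly the tangential projection, $\na^M r=\bar\na r^\top$, the chain rule gives $\na^M\psi=(1-\al)h(r)^{-\al}h'(r)\,\bar\na r^\top$. Pairing with $\bar\na r$ and using $\lan\bar\na r^\top,\bar\na r\ran=|\bar\na r^\top|^2=1-|\bar\na r^\perp|^2$ produces the exact identity $\lan\na^M\psi,\bar\na r\ran=(1-\al)h(r)^{-\al}h'(r)(1-|\bar\na r^\perp|^2)$.

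Finally I would add the two contributions, factor out the common $h(r)^{-\al}h'(r)$, and verify that the bracket collapses: $(k-1+|\bar\na r^\perp|^2)+(1-\al)(1-|\bar\na r^\perp|^2)=(k-\al)+\al|\bar\na r^\perp|^2$, which is precisely the claimed right-hand side. There is no hard analytic step here, since all the curvature content is already packaged in \eqref{hes-comp}; the only genuine care is bookkeeping the tangential/normal decomposition of $\bar\na r$ and justifying that the sign of $\psi$ (hence the validity of the one-sided estimate after multiplication) follows from $h>0$ on $\m B^*$.
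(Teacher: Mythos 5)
Your proof is correct and follows essentially the same route as the paper's: both apply Lemma \ref{prop}(b) to split off a scalar factor and then feed the Hessian comparison \eqref{hes-comp} into the divergence term, differing only in whether one peels off $h(r)^{-\alpha}$ from $X$ or $h(r)^{1-\alpha}$ from $\bar\na r$. The resulting bookkeeping of the $|\bar\na r^\perp|^2$ contributions is distributed slightly differently but collapses to the identical bound.
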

\begin{proof}
By Lemma  \ref{prop} item \ref{prop-b}, $\dv_M (\frac{X}{{h(r)}^\al}) = \frac{1}{{h(r)}^\al}\dv X +\lan \na^M (\frac{1}{{h(r)}^\al}),~X \ran$. Since $1=|\bar\na r|^2=|\bar\na r^\top|^2+|\bar\na r^\perp|^2$, and $\na^M (\frac{1}{|X|^\al}) = -\al~\frac{h'(r) \bar\na r ^\top}{{h(r)}^{\al + 1}}$, one has 
\begin{eqnarray*}
\Div (\dfrac{X}{|X|^\al}) &=& \frac{1}{{h(r)}^\al}\dv_M X -\frac{\al h'(r)}{{h(r)}^{\al+1}}\lan \bar\na r^\top, h(r)\bar\na r\ran 
\\&=& \frac{1}{{h(r)}^\al}\dv_M X -\frac{\al h'(r)}{{h(r)}^\al} + \frac{\al h'(r)|\bar\na r^\perp|^2}{{h(r)}^{\al+1}}.
\end{eqnarray*}
On the other hand, let $\{e_1,\cdots, e_k\}$ denote an orthonormal frame on $M$. By (\ref{hes-comp}), we have
\begin{eqnarray*}
\dv_M X &=& \sum_{i=1}^k\lan D_{e_i}X, e_i\ran =\sum_{i=1}^k [h'(r)\lan \bar\na r,e_i\ran^2 + h(r)\hs_{r}(e_i,e_i)] 
\\&\ge& h'(r)|(\bar\na r)^\top|^2 + h'(r)(k- |(\bar\na r)^\top|^2) = k h'(r)
\end{eqnarray*}
Lemma \ref{formula} follows.
\end{proof}

\section{The Hardy inequality for submanifolds}

Carron \cite{C} proved the following Hardy Inequality.
\begin{theoremA}[Carron]\label{carron-teo} Let $\Si^k$ be a complete non compact Riemannian manifold isometrically immersed in a  Euclidean space $\real^n$. Fix   $v\in \real^n$ and let $r(x)=|x-v|$, for all $x\in \Si$. Then, for all smooth function $\psi\in C_c^\infty(\Si)$ compactly supported in $\Si$, the following Hardy inequality holds:
\begin{equation*} \frac{(k-2)^2}{4} \int_\Si \frac{\psi^2}{r^2} \le \int_\Si [|\na^\Si \psi|^2 + \frac{k-2}{2}\frac{|H|\psi^2}{r}].
\end{equation*}
\end{theoremA}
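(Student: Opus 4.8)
The plan is to specialize the general framework of Section~2 to the flat ambient space and then squeeze the sharp constant out of Lemma~\ref{formula} with exponent $\al=2$. Concretely, I would set $\bar M=\real^n$, $M=\Si$, $\xi_0=v$ and $\m K\equiv 0$, so that the Cauchy problem \eqref{cauchy-h} is solved by $h(r)=r$ with $h'(r)\equiv 1$ and $\bar r_0=+\infty$ (Example~\ref{mK-constant} item~\ref{ambient-b=0}). Here $r(x)=|x-v|$, and the radial field of Section~2 becomes $X=h(r)\bar\na r=r\,\bar\na r=x-v$, so that $|X|=r$. Since $\real^n$ is flat with infinite injectivity radius, the ball $\m B$ may be taken to be all of $\real^n$, and the entire computation takes place on $\Si^*=\Si\setminus\{r=0\}$.

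The heart of the argument is an integrated divergence identity for the vector field $Y=\psi^2\,X/|X|^2$. Applying Lemma~\ref{prop} item~\ref{prop-a} to $Y$ and integrating over $\Si$, the tangential part $\Div Y^\top$ contributes nothing by the divergence theorem, since $\psi$ is compactly supported; this leaves $\int_\Si \Div Y=-\int_\Si \psi^2\,\lan H,X\ran/|X|^2$. On the other hand, expanding $\Div Y$ by the product rule Lemma~\ref{prop} item~\ref{prop-b} gives $\Div Y=\psi^2\,\dv_M(X/|X|^2)+2\psi\,\lan\na^\Si\psi,X\ran/|X|^2$. Equating the two expressions, and invoking Lemma~\ref{formula} with $\al=2$ (which, after discarding the nonnegative term $2|\bar\na r^\perp|^2/r^2$, reads $\dv_M(X/|X|^2)\ge (k-2)/r^2$), I obtain
\begin{equation*}
(k-2)\int_\Si\frac{\psi^2}{r^2}\le -\int_\Si\psi^2\,\frac{\lan H,X\ran}{r^2}-2\int_\Si\psi\,\frac{\lan\na^\Si\psi,X\ran}{r^2}.
\end{equation*}

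To finish, I would bound the right-hand side pointwise. Because $|X|=r$, Cauchy--Schwarz yields $-\lan H,X\ran\le |H|\,r$ and $|\lan\na^\Si\psi,X\ran|=|\lan\na^\Si\psi,X^\top\ran|\le |\na^\Si\psi|\,r$, so the display becomes $(k-2)\int_\Si \psi^2/r^2\le \int_\Si |H|\psi^2/r+2\int_\Si |\psi|\,|\na^\Si\psi|/r$. The sharp constant now emerges from Young's inequality $2|\psi|\,|\na^\Si\psi|/r\le \la\,\psi^2/r^2+\la^{-1}|\na^\Si\psi|^2$ with the optimal choice $\la=(k-2)/2$: absorbing the term $\la\int_\Si\psi^2/r^2$ into the left-hand side gives $\tfrac{k-2}{2}\int_\Si\psi^2/r^2\le \int_\Si|H|\psi^2/r+\tfrac{2}{k-2}\int_\Si|\na^\Si\psi|^2$, and multiplying through by $(k-2)/2$ produces exactly $\frac{(k-2)^2}{4}\int_\Si \psi^2/r^2\le \int_\Si |\na^\Si\psi|^2+\frac{k-2}{2}\int_\Si |H|\psi^2/r$.

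The only genuinely delicate point is the singularity of $X/|X|^2$ along $\{r=0\}$, which is nonempty precisely when $v$ lies on $\Si$; this is where the hypothesis $k>2$ (implicit in the factor $(k-2)^2/4$, and needed to keep $\la=(k-2)/2>0$) really enters. I expect to handle it by excising a small extrinsic ball $B_\ep(v)$ and checking that the resulting inner boundary integral $\int_{\Si\cap\p B_\ep}\lan Y^\top,\nu\ran$ is $O(\ep^{k-2})$: the $(k-1)$-dimensional area of $\Si\cap\p B_\ep$ scales like $\ep^{k-1}$, while $|Y|\le \psi^2/r\sim \ep^{-1}$ there, so the boundary term vanishes as $\ep\to 0$, and the same scaling shows all three integrals in the statement converge. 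Equivalently, one may first prove the inequality for test functions supported away from $v$ and then pass to the limit through a logarithmic cutoff. Apart from this localization, every step is a direct application of Lemmas~\ref{prop} and~\ref{formula} together with Cauchy--Schwarz and Young's inequality, which is exactly the elementary approach advertised in the introduction.
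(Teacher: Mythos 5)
Your proof is correct. Note first that the paper does not actually prove Theorem A --- it is quoted from Carron's article \cite{C} --- so the natural comparison is with the paper's own Hardy inequality (Proposition \ref{teo-hardy-norm} and Theorem \ref{teo-hardy}), of which your argument is the Euclidean, $p=2$, $\ga=2$ specialization: same radial field $X=h(r)\bar\na r$, same use of Lemma \ref{formula}, same divergence theorem plus Young's inequality with an optimized parameter, and essentially the same excision/cutoff treatment of the singularity at $v$ (the paper's $\vol_M([\rho<\de])=O(\de^k)$ estimate is exactly your $O(\ep^{k-2})$ boundary-term bound). The one genuine difference is how the mean curvature enters: the paper applies Young's inequality to the combined vector $p\,\na^M\psi+\psi H$, which produces the term $\big(|\na^M\psi|^2+\psi^2|H|^2/p^2\big)^{p/2}$ and therefore, as the authors themselves remark after stating Theorem A, does \emph{not} recover Carron's inequality except for minimal submanifolds. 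You instead split off the $H$-term by Cauchy--Schwarz ($-\lan H,X\ran\le |H|\,r$) before applying Young only to the gradient term, which is precisely what yields Carron's mixed term $\tfrac{k-2}{2}\int |H|\psi^2/r$ with the sharp constant $\tfrac{(k-2)^2}{4}$. So your route is the right one for the stated result, and it clarifies why the paper's more general theorem and Theorem A are not comparable. The only implicit hypothesis you should flag explicitly is $k>2$, which you correctly identify as what makes both the Young step ($\la=(k-2)/2>0$) and the removal of the singularity work.
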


Just comparing Theorem A with Corollary \ref{hardy-hadamard-cor} below,  given $\psi\in C_c^\infty(\Si)$, let $M$ be a compact subset of $\Si$ with compact smooth boundary $\p M$ satisfying $\spp(\psi)\subset M\subset \Si$. We will see that Corollary \ref{hardy-hadamard-cor} does not generalize Theorem A, unless $\Si$ is a minimal submanifold. \\

The result  below will be fundamental to obtain our Hardy inequality (see Theorem \ref{teo-hardy}).

\begin{proposition} \label{teo-hardy-norm} Fixed $\xi_0\in \bar M$, we assume $(\bar K_{\rad})_{\xi_0}\le \m K(r)$, where $r=r_{\xi_0}=d_{\bar M}(\cdot\,,\xi_0)$. Assume further  $M$ is contained in a ball $\m B=\m B_{\bar r_0}(\xi_0)$, for some $0<r_0<\min\{\bar r_0(\m K),\, Inj_{\bar M}(\xi_0)\}$.  Let $1< p < \infty$ and $-\infty<\ga<k$. Then, for all $\psi\in C^1(M)$, with $\psi\ge 0$, it holds
\begin{eqnarray*}
\frac{(k-\ga)^p h'(r_0)^{p-1}}{p^p} \int_{M} \frac{\psi^p h'(r)}{h(r)^{\ga}} + \frac{\ga[(k-\ga)h'(r_0)]^{p-1}}{p^{p-1}}\int_M \frac{\psi^ph'(r)\,|(\bar\na r)^\perp|^2}{h(r)^\ga} 
\\&& \hspace{-13.0cm}\le
\int_M \frac{1}{h(r)^{\ga-p}}[|\na^M \psi|^2 + \frac{\psi^2|H|^2}{p^2}]^{{p}/{2}} + \frac{[(k-\ga)h'(r_0)]^{p-1}}{p^{p-1}} \int_{\p M} \frac{\psi^p}{h(r)^{\ga-1}}\lan \bar\na r,\nu\ran,
\end{eqnarray*}
provided that $\int_{\p M} \frac{\psi^p}{h(r)^{\ga-1}}\lan \bar\na r,\nu\ran$ exists. Here, $\nu$ denotes the outward conormal vector to  $\p M$.
\end{proposition}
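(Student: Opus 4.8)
The plan is to test the pointwise divergence estimate of Lemma \ref{formula} against the weight $\psi^p$ and integrate by parts, choosing the exponent $\al=\ga$ so that the homogeneities match the two sides of the asserted inequality. Concretely, I would work with the vector field $Y=\psi^p\,\frac{X}{|X|^\ga}=\psi^p\,\frac{\bar\na r}{h(r)^{\ga-1}}$ on $M\cap\m B^*$ and compute $\int_M\dv_M Y$ in two ways. On one hand, Lemma \ref{prop} item \ref{prop-b} gives $\dv_M Y=\psi^p\,\dv_M(\frac{X}{|X|^\ga})+\lan\na^M\psi^p,\frac{X}{|X|^\ga}\ran$, and Lemma \ref{formula} bounds the first summand below by $\psi^p h'(r)[\frac{k-\ga}{h(r)^\ga}+\ga\frac{|(\bar\na r)^\perp|^2}{h(r)^\ga}]$. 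On the other hand, the divergence theorem combined with Lemma \ref{prop} item \ref{prop-a} yields $\int_M\dv_M Y=\int_{\p M}\psi^p\,\frac{\lan\bar\na r,\nu\ran}{h(r)^{\ga-1}}-\int_M\psi^p\,\frac{\lan\vec{H},(\bar\na r)^\perp\ran}{h(r)^{\ga-1}}$, since $\nu$ is tangent to $M$ and $\vec{H}$ is normal. Equating the two expressions and inserting the lower bound produces a master inequality in which the two desired left-hand integrals appear, while on the right one collects the boundary term together with $-p\int_M\frac{\psi^{p-1}}{h(r)^{\ga-1}}\lan\frac{\psi}{p}\vec{H}+\na^M\psi,\bar\na r\ran$.

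Next I would estimate that last integral by Cauchy--Schwarz. Since $\na^M\psi$ is tangent and $\vec{H}$ is normal, one has $|\frac{\psi}{p}\vec{H}+\na^M\psi|=(|\na^M\psi|^2+\frac{\psi^2|\vec{H}|^2}{p^2})^{1/2}$, and because $|\bar\na r|=1$ this gives $\lan\frac{\psi}{p}\vec{H}+\na^M\psi,\bar\na r\ran\le(|\na^M\psi|^2+\frac{\psi^2|\vec{H}|^2}{p^2})^{1/2}$; here the hypothesis $\psi\ge0$ is used to keep $\psi^{p-1}\ge0$ upon multiplying. The master inequality then becomes
$$(k-\ga)\int_M\frac{\psi^ph'(r)}{h(r)^\ga}+\ga\int_M\frac{\psi^ph'(r)|(\bar\na r)^\perp|^2}{h(r)^\ga}\le\int_{\p M}\frac{\psi^p\lan\bar\na r,\nu\ran}{h(r)^{\ga-1}}+p\int_M\frac{\psi^{p-1}}{h(r)^{\ga-1}}\Big(|\na^M\psi|^2+\tfrac{\psi^2|\vec{H}|^2}{p^2}\Big)^{1/2}.$$

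To reach the stated constants I would use the monotonicity of $h'$ recorded in the excerpt ($h''=-\m K h\le0$), so that $h'(r)\ge h'(r_0)>0$ on $M\subset\m B_{r_0}(\xi_0)$, and then apply Young's inequality $ab\le\frac{p-1}{p}\la^{p/(p-1)}a^{p/(p-1)}+\frac1p\la^{-p}b^p$ to the factorization of the last integrand with $a=(\frac{\psi^ph'(r)}{h(r)^\ga})^{(p-1)/p}$ and $b=(\frac{1}{h(r)^{\ga-p}}(|\na^M\psi|^2+\frac{\psi^2|\vec{H}|^2}{p^2})^{p/2})^{1/p}$, absorbing the spare factor $h'(r)^{-(p-1)/p}\le h'(r_0)^{-(p-1)/p}$. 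Choosing the free parameter by $\la^{p/(p-1)}=\frac{(k-\ga)h'(r_0)^{(p-1)/p}}{p}$ makes the coefficient of the $a^{p/(p-1)}$-term equal to $\frac{p-1}{p}(k-\ga)$, so absorbing it into the first left-hand integral leaves exactly $\frac{k-\ga}{p}$ there, while the surviving $b^p$-term carries the coefficient $\frac{p^{p-1}}{[(k-\ga)h'(r_0)]^{p-1}}$. Multiplying the whole inequality by $\frac{[(k-\ga)h'(r_0)]^{p-1}}{p^{p-1}}$ normalizes the $b^p$-integral to coefficient $1$ and reproduces all four constants in the statement; the hypothesis $\ga<k$ is precisely what guarantees $k-\ga>0$, so $\la$ is well defined and the absorbed coefficient stays positive.

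The step I expect to require the most care is not the algebra but the singularity of $\frac{X}{|X|^\ga}=\frac{\bar\na r}{h(r)^{\ga-1}}$ at $\xi_0$, where $r$ fails to be differentiable and the weight may blow up. I would make the integration by parts rigorous on $M_\de=M\setminus f^{-1}(\m B_\de(\xi_0))$ and let $\de\to0$, checking that the spurious inner-boundary contribution over $M\cap\p\m B_\de$ disappears. This is controlled by a dimension count: the induced $(k-1)$-measure of $M\cap\p\m B_\de$ is $O(\de^{k-1})$ while the weight $h(\de)^{1-\ga}$ is $O(\de^{1-\ga})$, so the term is $O(\de^{k-\ga})\to0$ exactly because $\ga<k$ --- the same inequality that ensures convergence of the interior integrals near $\xi_0$. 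With this limiting argument in place, and assuming as in the statement that the genuine boundary integral over $\p M$ exists, the inequality obtained on each $M_\de$ passes to the limit and yields the Proposition.
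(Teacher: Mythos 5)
Your proposal is correct and follows essentially the same route as the paper: the same vector field $\psi^p X/|X|^\gamma$ tested against Lemma \ref{formula}, the divergence theorem with the mean-curvature correction, orthogonality of $\na^M\psi$ and $\vec H$, a Young inequality with an optimally chosen parameter (your $\la$ plays the role of the paper's $\ep_0$ and yields the same constants), and an $O(\de^{k-\ga})$ argument at $\xi_0$ --- the only cosmetic difference being that you excise a small ball where the paper multiplies by a cut-off function.
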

\begin{proof} First, we assume $\xi_0\notin M$. Let $X=h(r_{\xi_0})\bar\na r_{\xi_0}$ and write $\ga=\al+\be+1$ with $\al,\be\in \real$.  Let  $\psi \in C^1(M)$. By Lemma \ref{formula},
\begin{eqnarray}\label{dv-hardy}
\dv_M(\frac{\psi^p X^\top}{|X|^\ga}) &=& \psi^p \dv_M(\frac{X^\top}{|X|^\ga}) + \lan \na^M \psi^p,\frac{X}{|X|^\ga}\ran
\nonumber\\&=& \psi^p \dv_M(\frac{X}{|X|^\ga}) + \psi^p \lan \frac{X}{|X|^\ga}, H\ran +p\,\psi^{p-1} \lan \na^M \psi, \frac{X}{|X|^\ga}\ran 
\nonumber\\&\ge& \psi^p h'(r)[\frac{k-\ga}{{h(r)}^\ga}+\frac{\ga |\bar\na r^\perp|^2}{{h(r)}^\ga}]  + \lan p \na^M \psi + \psi {H}, \frac{\psi^{p-1} X}{|X|^\ga}\ran.
\end{eqnarray}

By the divergence theorem, 
\begin{eqnarray}\label{0notin}
 \int_{M} \psi^p h'(r)[\frac{k-\ga}{{h(r)}^{\ga}} + \frac{\ga|\bar\na r^\perp|^2}{{h(r)}^{\ga}}] &\le& - \int_{M} \lan p \na^M \psi + \psi {H}, \frac{\psi^{p-1} X}{|X|^\ga}\ran
\nonumber\\&& +  \int_{\p M} \frac{\psi^p}{|X|^\ga}\lan X,\nu \ran,
\end{eqnarray}
where $\nu$ denotes the outward conormal vector to the  boundary $\p M$ in $M$. 
Let $r^*:=\max_{x\in \spp(\psi)} d_{\bar M}(f(x),\xi_0)$. Since $0<r^* < r_0$ and  $h''=-\m K h\le 0$ it holds that $h'(r)\ge h'(r^*) > h'(r_0)\ge 0$ in $M$.  By the Young inequality with $\ep>0$ (to be chosen soon), it holds
\begin{eqnarray}
\int_{M} \psi^p h'(r)[\frac{k-\ga}{{h(r)}^{\ga}} + \frac{\ga |\bar\na r^\perp|^2}{{h(r)}^\ga}] &\le& -\int_{M} \lan \frac{p\,\na^M\psi}{|X|^\al}+ \frac{\psi{H}}{|X|^\al}, \frac{\psi^{p-1} X}{|X|^{\be+1}}\ran 
\nonumber\\&& + \int_{\p M} \frac{\psi^p}{|X|^\ga}\lan X,\nu \ran
\nonumber\\ &&\hspace{-6.5cm} \le \frac{1}{p\ep^p} \int_M  |\frac{p\na^M\psi}{{h(r)}^\al}+\frac{\psi{H}}{{h(r)}^\al}|^p + \frac{\ep^q}{q} \int_M  \frac{|\psi|^{(p-1)q}}{{h(r)}^{\be q}} + \int_{\p M} \frac{\psi^p}{{h(r)}^{\ga-1}}\lan \bar\na r,\nu \ran
\nonumber\\&&\hspace{-6.5cm} \le \frac{1}{p\ep^p} \int_M |\frac{p^2|\na^M\psi|^2}{{h(r)}^{2\al}}+\frac{\psi^2|{H}|^2}{{h(r)}^{2\al}}|^{p/2} + \frac{\ep^q}{q}  \int_M \frac{|\psi|^{(p-1)q}}{{h(r)}^{\be q}}\frac{h'(r)}{h'(r_0)} 
\nonumber\\&&+  \int_{\p M} \frac{\psi^p}{{h(r)}^{\ga-1}}\lan \bar\na r,\nu \ran, \nonumber
\end{eqnarray}
where $q=\frac{p}{p-1}$. 

Now, consider $\be=(p-1)(\al+1)$. We have $\ga=q\be=p(\al+1)$. Thus, 
\begin{eqnarray}\label{h-ep}
\vp(\ep) \int_{M}  \frac{\psi^p h'(r)}{{h(r)}^{\ga}} + p\ga\ep^p \int_{M} \frac{\psi^p h'(r)|\bar\na r^\perp|^2}{{h(r)}^{\ga}} &\le& \  \hspace{-0.3cm} \int_M [\frac{p^2|\na^M\psi|^2}{{h(r)}^{2\al}}+\frac{\psi^2|{H}|^2}{{h(r)}^{2\al}}]^{p/2}\nonumber  
\\&& + \ p\ep^p \int_{\p M} \frac{\psi^p}{{h(r)}^{\ga-1}}\lan \bar\na r,\nu \ran
\end{eqnarray}
where $\vp(\ep)=p\ep^p[(k-\ga)-\frac{\ep^q}{q h'(r_0)}] = \frac{p\ep^p}{h'(r_0)}[(k-\ga)h'(r_0)-\frac{\ep^q}{q}]$. Now, notice that
\begin{eqnarray*}
\frac{h'(r_0)}{p}\vp'(\ep)(\ep) &=& p\ep^{p-1}[(k-\ga)h'(r_0)-\frac{\ep^q}{q}] - \ep^p \ep^{q-1} 
\\&=& p\ep^{p-1}[(k-\ga)h'(r_0)-\frac{\ep^q}{q} - \frac{\ep}{p}\ep^{q-1}] 
\\&=& p\ep^{p-1}[(k-\ga)h'(r_0)-\ep^q]. 
\end{eqnarray*}
And, $\frac{h'(r_0)}{p^2}\vp''(\ep)=(p-1)\ep^{p-2}[(k-\ga)h'(r_0)-\ep^q]-q\ep^{p-1}\ep^{q-1}$. Thus, $\vp'(\ep)=0$ if and only if $\ep^q=(k-\ga)h'(r_0)>0$. At this point,  $\vp''(\ep)=-p^2q\ep^{p+q-2}< 0$. Hence, $\vp(\ep)$ attains its maximum at $\ep_0= [(k-\ga)h'(r_0)]^{\frac{p-1}{p}}$, with $\vp(\ep_0)=\frac{p}{h'(r_0)}[(k-\ga)h'(r_0)]^{p-1}(1-\frac{1}{q})(k-\ga)h'(r_0)=(k-\ga)^p h'(r_0)^{p-1}$.

Since $p\al=\ga-p$, by (\ref{h-ep}), and multiplying both sides by $\frac{1}{p^p}$, it holds
\begin{eqnarray*}
\frac{(k-\ga)^p h'(r_0)^{p-1}}{p^p} \int_{M} \frac{h'(r)\psi^p}{{h(r)}^{\ga}} + \frac{\ga[(k-\ga)h'(r_0)]^{p-1}}{p^{p-1}}\int_{M} \frac{\psi^ph'(r)|\bar\na r^\perp|^2}{{h(r)}^\ga} 
\nonumber\\ && \hspace{-12.5cm} \le  \int_M \frac{1}{{h(r)}^{\ga-p}}[|\na^M \psi|^2 + \frac{\psi^2|H|^2}{p^2}]^{p/2} + \frac{[(k-\ga)h'(r_0)]^{p-1}}{p^{p-1}}\int_{\p M} \frac{\psi^p}{{h(r)}^{\ga-1}}\lan \bar\na r,\nu \ran.  
\end{eqnarray*}

Now, we assume $\xi_0\in M$. Let $\m Z_0=\{x\in M \mid f(x)=\xi_0\}$. Since every immersion is locally an embedding, it follows that $\m Z_0$ is discrete, hence it is finite, since $M$ is compact. We write $\m Z_0=\{p_1,\ldots,p_l\}$ and let $\rho=r\circ f=d_{\bar M}(f\,,\xi_0)$. 
By a Nash Theorem, there is an isometric embedding of $\bar M$ in an Euclidean space $\real^N$. The composition of such immersion with $f$ induces an isometric immersion $\bar f:M\to \real^N$. By the compactness of $M$, finiteness of $\m Z_0$, and the local form of an immersion,  one can choose a small $\ep>0$, such that $ [\rho<2\ep]:=\rho^{-1}[0,2\ep) = U_1 \sqcup \ldots \sqcup U_l$ (disjoint union), where each $U_i$ is a neighborhood of $p_i$ in $M$ such that the restriction $\bar f|_{U_i}:U_i\to \real^N$ is a graph over a smooth function, say $u_i:U_i\to \real^{N-k}$. Thus, considering the set $[\rho<\de]=[r<\de]\cap M$ (identifying $x\in M$ with $f(x)$), with $0<\de<\ep$, again by the finiteness of $\m Z_0$, we have that  the volume $\vol_M([\rho<\de])=O(\de^k)$, as $\de\to 0$. Similarly, one also obtain that $\vol_{\p M}(\p M \cap [\rho<\de])=O(\de^{k-1})$.

Now, for each $0<\de<\ep$, consider the cut-off function $\eta=\eta_\de\in C^\infty(M)$  satisfying:
\begin{eqnarray}\label{cut-off}
&& 0\le \eta \le 1, \mbox{ in } M; \nonumber\\
&&\eta = 0, \mbox{ in } [\,\rho<\de], \ \mbox{ and } \eta = 1, \mbox{ in } [\,\rho>2\de];\\
&& |\na^M \eta|\le L/\de, \nonumber
\end{eqnarray}
for some constant $L>1$, that does not depend on $\de$ and $\eta$. Consider $\phi=\eta\psi$. Since $\phi\in C^1(M)$ and   $\xi_0\notin M':=\spp(\phi)$, it holds 
\begin{eqnarray*}\label{hardy-0-phi}
\frac{(k-\ga)^p h'(r_0)^{p-1}}{p^p} \int_{M} \frac{\phi^ph'(r) }{{h(r)}^{\ga}} + \frac{\ga[(k-\ga)h'(r_0)]^{p-1}}{p^{p-1} }\int_{M} \frac{\phi^p h'(r) |\bar\na r^\perp|^2}{{h(r)}^{\ga}} 
\nonumber\\ && \hspace{-12.5cm} \le \ \int_M \frac{h'(r_0)^{1-p}}{{h(r)}^{\ga-p}}[|\na^M \phi|^2 + \frac{\phi^2| H|^2}{p^2}]^{p/2} + \frac{[(k-\ga)h'(r_0)]^{p-1}}{p^{p-1}} \int_{\p M} \frac{\phi^p}{{h(r)}^{\ga-1}}\lan \bar\na r,\nu \ran.
\end{eqnarray*}

The integral $\int_{\p M} \frac{\phi^p}{{h(r)}^{\ga-1}}\lan \bar\na r,\nu \ran$ exists, since  $\int_{\p M} \frac{\psi^p}{{h(r)}^{\ga-1}}|\lan \bar\na r,\nu\ran|$ exists and $0\le \phi\le \psi$.   Furthermore, 
\begin{eqnarray}\label{est-grad-0-phi-1}
\int_M \frac{1}{{h(r)}^{\ga-p}}[|\na^M\phi|^2+\frac{\phi^2|{H}|^2}{p^2}]^{p/2} &=& \int_{[\rho>2\de]} \frac{1}{{h(r)}^{\ga-p}}[|\na^M\psi|^2+\frac{\psi^2|{H}|^2}{p^2}]^{p/2} 
\nonumber \\&& \hspace{-0.6cm}+ \int_{[\de<\rho<2\de]} \frac{1}{{h(r)}^{\ga-p}}[|\na^M\phi|^2+\frac{\phi^2|{H}|^2}{p^2}]^{p/2} 
\end{eqnarray}
and
\begin{eqnarray*}
\int_{[\de<\rho<2\de]} \frac{1}{{h(r)}^{\ga-p}}[|\na^M\phi|^2+\frac{\phi^2|{H}|^2}{p^2}]^{p/2} && 
\\&& \hspace{-5cm} \le \ \int_{[\de<\rho<2\de]} \frac{O(1)}{{h(r)}^{\ga-p}}[\eta^p|\na^M\psi|^p +|\psi|^p |\na^M\eta|^p + |\psi|^p {|H|^p}]
\\&& \hspace{-5cm} = \ \int_{[\de<\rho<2\de]} O(\frac{1}{h(\de)^{\ga-p}}) (O(1)+ O(\frac{1}{\de^p}))
\\&& \hspace{-5cm} = \ O(\frac{1}{\de^{\ga-p}}) (O(1)+ O(\frac{1}{\de^p}))O(\de^k) = O(\de^{k-\ga}),
\end{eqnarray*}
as $\de\to 0$. Therefore, it holds  
\begin{eqnarray*}
\frac{(k-\ga)^ph'(r_0)^{p-1}}{p^p } \int_{[r>2\de]\cap M} \frac{\psi^ph'(r)}{{h(r)}^{\ga}} 
\\&& \hspace{-3cm}+ \frac{\ga[(k-\ga)h'(r_0)]^{p-1}}{p^{p-1} }\int_{[r>2\de]\cap M} \frac{\psi^ph'(r)|\bar\na r^\perp|^2}{{h(r)}^{\ga}} 
\nonumber\\ && \hspace{-6cm} \le \ \int_M \frac{1}{{h(r)}^{\ga-p}}[|\na^M \psi|^2 + \frac{\psi^2|H|^2}{p^2}]^{p/2} 
\\&&\hspace{-5cm} + \ \frac{[(k-\ga)h'(r_0)]^{p-1}}{p^{p-1}} \int_{\p M \cap [\rho>2\de]} \frac{\psi^p}{{h(r)}^{\ga-1}}\lan \bar\na r,\nu \ran + O(\de^{k-\ga}).
\end{eqnarray*}
Proposition \ref{teo-hardy-norm}, follows, since $k-\ga>0$ and $\int_{\p M} \frac{\psi^p}{{h(r)}^{\ga-1}}\lan \bar\na r,\nu \ran$ exits.
\end{proof}

It is simple to see that, for all numbers $a\ge 0$ and $b\ge 0$, it holds
\begin{equation}\label{num-property}
\min\{1,2^{\frac{p-2}{2}}\}(a^p+b^p) \le (a^2+b^2)^{p/2} \le \max\{1,2^{\frac{p-2}{2}}\}(a^p+b^p).
\end{equation}
In fact, to show this, without loss of generality, we can suppose $a^2+b^2=1$. We write $a=\cos\te$ and $b=\sin\te$, for some $\te\in [0,\pi/2]$. If $p=2$, there is nothing to do. Assume $p\neq 2$. Consider $f(\te)= a^p+b^p=cos^p(\te)+\sin^p(\te)$. 

The derivative of $f$ is given by $f'(\te)=-p\cos^{p-1}\sin(\te)+p\sin^{p-1}\cos(\te)$. Thus, $f'(\te)=0$ iff  $\cos^{p-1}\sin(\te)=\sin^{p-1}\cos(\te)=0$, that is, iff either $\cos(\te)=0$ or $\sin(\te)=0$, or $\cos^{p-2}(\te)=\sin^{p-2}(\te)$. Thus, $f'(\te)=0$ iff $\te=0$, $\te=\frac{\pi}{2}$, or $\te=\frac{\pi}{4}$.  So, the critical values are $f(0)=f(\frac{\pi}{2})=1$ and $f(\frac{\pi}{4})=2 (\frac{1}{\sqrt{2}})^{p}=2^{1-\frac{p}{2}}$. 
Thus, $\min\{1,2^{1-\frac{p}{2}}\} \le f(\te)=a^p+b^p\le \max\{1,2^{1-\frac{p}{2}}\}$. 
Hence, (\ref{num-property}) follows. 
\\

As a consequence of (\ref{num-property}) and Proposition \ref{teo-hardy-norm}, we obtain the following Hardy inequality.

\begin{theorem}\label{teo-hardy} Fixed $\xi_0\in \bar M$, we assume $(\bar K_{\rad})_{\xi_0}\le \m K(r)$, where $r=r_{\xi_0}=d_{\bar M}(\cdot\,,\xi_0)$. Assume that $M$ is contained in a ball $\m B=\m B_{r_0}(\xi_0)$, for some $0<r_0<\min\{\bar r_0(\m K),\, Inj_{\bar M}(\xi_0)\}$.  Let $1\le p<\infty$ and $-\infty<\ga<k$. Then, for all $\psi\in C^1(M)$, it holds
\begin{eqnarray*}
\frac{(k-\ga)^p h'(r_0)^{p-1}}{p^p} \int_{M} \frac{|\psi|^p h'(r)}{h(r)^{\ga}} + \frac{\ga[(k-\ga)h'(r_0)]^{p-1}}{p^{p-1}}\int_M \frac{|\psi|^p h'(r)|\bar\na r^\perp|^2}{h(r)^{\ga}} 
\\&& \hspace{-11.5cm}
\le  \ \m A_p\int_M [\frac{|\na^M \psi|^p}{h(r)^{\ga-p}} + \frac{|\psi|^p|H|^p}{p^p h(r)^{\ga-p}}] +\frac{[(k-\ga)h'(r_0)]^{p-1}}{p^{p-1}} \int_{\p M} \frac{|\psi|^p}{h(r)^{\ga-1}}.
\end{eqnarray*}
where $\m A_p=\max\{1, 2^{\frac{p-2}{2}}\}$. Moreover, if $M$ is minimal, we can take  $\m A=1$. 
\end{theorem}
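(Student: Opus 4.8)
The plan is to deduce Theorem \ref{teo-hardy} from Proposition \ref{teo-hardy-norm} together with the elementary numerical bound (\ref{num-property}). I would first establish the inequality for a nonnegative test function $\psi\in C^1(M)$ and only afterwards remove the sign restriction.

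For $1<p<\infty$ and $\psi\ge 0$, Proposition \ref{teo-hardy-norm} applies verbatim, and the only term needing reshaping is the right-hand integrand $h(r)^{p-\ga}\bigl(|\na^M\psi|^2+\psi^2|H|^2/p^2\bigr)^{p/2}$. Applying (\ref{num-property}) with $a=|\na^M\psi|$ and $b=\psi|H|/p$ gives
\[
\Bigl(|\na^M\psi|^2+\tfrac{\psi^2|H|^2}{p^2}\Bigr)^{p/2}\le \m A_p\Bigl(|\na^M\psi|^p+\tfrac{\psi^p|H|^p}{p^p}\Bigr),\qquad \m A_p=\max\{1,2^{(p-2)/2}\},
\]
which splits the fractional power into the two summands of the statement. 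For the boundary integral I would use that $\bar\na r$ and the conormal $\nu$ are unit vectors, so $\lan\bar\na r,\nu\ran\le 1$ and $\int_{\p M}h(r)^{1-\ga}\psi^p\lan\bar\na r,\nu\ran\le\int_{\p M}h(r)^{1-\ga}\psi^p$; assuming, as we may, that the right-hand side of the asserted inequality is finite, the boundary integral in the proposition converges absolutely and this replacement is legitimate. This proves the theorem for $1<p<\infty$, $\psi\ge 0$; and when $M$ is minimal we have $H\equiv 0$, so $b=0$ and (\ref{num-property}) becomes an equality, yielding $\m A=1$.

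The endpoint $p=1$ is where Proposition \ref{teo-hardy-norm} does not directly apply, since the Young inequality driving its proof degenerates ($q=\infty$); this is the main obstacle. I would treat it by repeating the divergence computation of the proposition without the Young splitting, i.e.\ applying Lemma \ref{formula} to $X/|X|^\ga$ itself. For $\psi\ge 0$, expanding $\dv_M\bigl(h(r)^{-\ga}\psi\,X^\top\bigr)$ and using Lemma \ref{formula}, the Cauchy--Schwarz estimates $|\lan H,\tfrac{X}{|X|^\ga}\ran|\le|H|\,h(r)^{1-\ga}$ and $|\lan\na^M\psi,\tfrac{X}{|X|^\ga}\ran|\le|\na^M\psi|\,h(r)^{1-\ga}$ produce, after integration, the divergence theorem, and the same cut-off argument near $\xi_0$ used in the proposition, exactly the $p=1$ inequality with $\m A_1=1$. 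Alternatively, one may let $p\to1^+$ in the case already established, since every coefficient tends to its $p=1$ value and, $M$ being compact with $\psi\in C^1$, the integrands are dominated and the limit passes.

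It remains to drop the hypothesis $\psi\ge 0$. For general $\psi\in C^1(M)$ I would apply the cases above to the regularizations $\psi_\ep=\sqrt{\psi^2+\ep^2}-\ep$, which are nonnegative and $C^1$, satisfy $0\le\psi_\ep\nearrow|\psi|$ as $\ep\to0$, and obey $|\na^M\psi_\ep|=\tfrac{|\psi|}{\sqrt{\psi^2+\ep^2}}|\na^M\psi|\le|\na^M\psi|$ with $|\na^M\psi_\ep|\to|\na^M\psi|$ almost everywhere. Since $M$ is compact all integrals are finite, so monotone convergence on the left-hand side and on the $|H|$- and boundary-terms, together with dominated convergence on the gradient term, let me pass to the limit and obtain the stated inequality for $|\psi|$, which is the assertion of the theorem.
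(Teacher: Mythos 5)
Your proposal is correct and follows essentially the same route as the paper: apply Proposition \ref{teo-hardy-norm} together with the elementary bound (\ref{num-property}), reduce to nonnegative functions via the regularization $\sqrt{\psi^2+\ep^2}$ (your subtraction of $\ep$ only makes the convergence monotone increasing rather than decreasing), and obtain the endpoint $p=1$ by letting $p\to 1^+$ with dominated convergence, exactly as in the paper. Your alternative direct argument for $p=1$ (redoing the divergence computation without the Young splitting) is a nice observation but does not change the substance of the proof.
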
 

\begin{proof} We may assume $h'(r_0)>0$, otherwise, there is nothing to do.  First, we fix $p>1$ and let $\psi\in C^1(M)$. Take $\ep>0$ and consider the function $\psi_{\ep}=(\psi^2+\ep^2)^{1/2}$. Note that $\psi_\ep\ge |\psi|\ge 0$ and $|\na \psi_\ep| = \frac{\psi}{(\psi^2+\ep^2)^{{1}/{2}}}|\na \psi|\le |\na\psi|$. Thus, by Proposition \ref{teo-hardy-norm}, 
\begin{eqnarray*}
\frac{(k-\ga)^p }{p^p} \int_{M} \frac{\psi_\ep^p h'(r)}{{h(r)}^{\ga}} + \frac{\ga(k-\ga)^{p-1}}{p^{p-1}}\int_M \frac{\psi_\ep^p h'(r)|\bar\na r^\perp|^2}{{h(r)}^{\ga}} 
\\&& \hspace{-8cm}\le
\int_M \frac{h'(r_0)^{1-p}}{{h(r)}^{\ga-p}}[|\na^M \psi|^2 + \frac{\psi_\ep^2|H|^2}{p^2}]^{{p}/{2}} + \frac{(k-\ga)^{p-1}}{p^{p-1}} \int_{\p M} \frac{\psi_\ep^p}{{h(r)}^{\ga-1}}.
\end{eqnarray*}
Since $\psi_{\ep_1}\le \psi_{\ep_2}$, if $\ep_1<\ep_2$, and $|\psi|\le \psi_\ep \le |\psi|+\ep$, by taking $\ep\to 0$, we have
\begin{eqnarray}\label{hardy-ineq-norm-p>1}
\frac{(k-\ga)^p}{p^p} \int_{M} \frac{|\psi|^p h'(r)}{{h(r)}^{\ga}} + \frac{\ga(k-\ga)^{p-1}}{p^{p-1}}\int_M \frac{|\psi|^ph'(r)|\bar\na r^\perp|^2}{{h(r)}^{\ga}} 
\nonumber\\&& \hspace{-8.5cm}\le
\int_M \frac{h'(r_0)^{1-p}}{{h(r)}^{\ga-p}}[|\na^M \psi|^2 + \frac{\psi^2|H|^2}{p^2}]^{{p}/{2}} + \frac{(k-\ga)^{p-1}}{p^{p-1}} \int_{\p M} \frac{|\psi|^p}{{h(r)}^{\ga-1}}.
\end{eqnarray}
Now, taking $p\to 1$, and applying the dominated convergence theorem, we obtain that (\ref{hardy-ineq-norm-p>1}) also holds for $p=1$. Applying (\ref{num-property}) in inequality (\ref{hardy-ineq-norm-p>1}), Theorem \ref{teo-hardy} follows.
\end{proof}

 As a consequence of Theorem \ref{teo-hardy}, we obtain a Hardy type inequality for submanifolds in ambient spaces having a pole with nonpositive radial sectional curvature. Namely, the following result holds

\begin{corollary}\label{hardy-hadamard-cor} Let $\bar M$ be a complete simply-connected manifold with radial sectional curvature $(\bar K_{\rad})_{\xi_0}\le 0$, for some $\xi_0\in \bar M$. 
Let $r=r_{\xi_0}=d_{\bar M}(\cdot\,,\xi_0)$ and let $1\le p <k$ and $-\infty<\ga<k$. Then, for all $\psi\in C^1(M)$, it holds
\begin{eqnarray*}
\frac{(k-\ga)^p}{p^p} \int_{M} \frac{|\psi|^p}{{r}^{\ga}} + \frac{\ga(k-\ga)^{p-1}}{p^{p-1}}\int_M \frac{|\psi|^p|\bar\na r^\perp|^2}{{r}^{\ga}} 
\\&& \hspace{-5.5cm}
\le  \ \m A_p\int_M [\frac{|\na^M \psi|^p}{{r}^{\ga-p}} + \frac{|\psi|^p|H|^p}{p^p {r}^{\ga-p}}] +\frac{(k-\ga)^{p-1}}{p^{p-1}} \int_{\p M} \frac{|\psi|^p}{{r}^{\ga-1}}.
\end{eqnarray*}
where $\m A_p=\max\{1, 2^{\frac{p-2}{2}}\}$. Moreover, if $M$ is minimal, we can take  $\m A_p=1$. 
\end{corollary}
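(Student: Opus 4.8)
The plan is to obtain Corollary~\ref{hardy-hadamard-cor} as a direct specialization of Theorem~\ref{teo-hardy}, taking the comparison function $\m K\equiv 0$. By hypothesis the radial sectional curvature satisfies $(\bar K_{\rad})_{\xi_0}\le 0=\m K(r)$, so this is an admissible choice in the sense required by Theorem~\ref{teo-hardy}; moreover it is precisely the choice for which the weight $h$ degenerates to the Euclidean one. Everything then reduces to reading off the resulting inequality after substituting the explicit $h$.

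First I would record the comparison geometry. Solving the Cauchy problem \eqref{cauchy-h} with $\m K\equiv 0$ gives, as in Example~\ref{mK-constant} with $b=0$, the solution $h(t)=t$; hence $h'(t)=1$ for every $t$, and $\bar r_0(\m K)=+\infty$. The decisive feature is that $h'$ is now constant, so the factors $h'(r)$ and $h'(r_0)$ occurring throughout Theorem~\ref{teo-hardy} all equal $1$, while $h(r)^{\ga}$ becomes $r^{\ga}$ and $h(r)^{\ga-p}$ becomes $r^{\ga-p}$. Thus the conclusion of Theorem~\ref{teo-hardy} collapses, term by term, onto the asserted inequality.

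Next I would check that the ball hypothesis of Theorem~\ref{teo-hardy} can always be met. Since $\bar M$ is complete, simply connected, and has nonpositive radial sectional curvature (the Cartan--Hadamard situation), one has $\inj_{\bar M}(\xi_0)=+\infty$; together with $\bar r_0(\m K)=+\infty$ this makes the threshold $\min\{\bar r_0(\m K),\,\inj_{\bar M}(\xi_0)\}$ infinite. Consequently, given any $\psi\in C^1(M)$ on the (compact) submanifold $M$, one may choose $r_0$ large enough that $M\subset \m B_{r_0}(\xi_0)$, invoke Theorem~\ref{teo-hardy}, and substitute $h(r)=r$ and $h'(r)=h'(r_0)=1$ into its conclusion. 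The minimal case follows verbatim, since there $\m A_p=1$.

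The step I expect to require the most care is not any computation but the uniformity in $r_0$. In Theorem~\ref{teo-hardy} the constant carries the factor $h'(r_0)^{p-1}$, which in general degrades as the enclosing ball grows; here, however, $\m K\equiv 0$ forces $h'(r_0)=1$ independently of $r_0$, so enlarging the ball to contain an arbitrary compact $M$ costs nothing and the constant $(k-\ga)^p/p^p$ survives intact. The only other point worth a line is justifying $\inj_{\bar M}(\xi_0)=+\infty$ from the curvature hypothesis, so that $r$ is smooth on $\m B^{*}$ and the Hessian comparison \eqref{hes-comp} underlying Theorem~\ref{teo-hardy} is available on all of $M\cap \m B^{*}$.
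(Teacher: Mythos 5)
Your proposal is correct and is exactly the argument the paper intends: the corollary is obtained by specializing Theorem~\ref{teo-hardy} to $\m K\equiv 0$, so that $h(t)=t$, $h'\equiv 1$, $\bar r_0(\m K)=+\infty$, and $\xi_0$ is a pole (hence $\inj_{\bar M}(\xi_0)=+\infty$ and the compact $M$ lies in some admissible ball). Your added remarks on the uniformity of $h'(r_0)=1$ and on the pole property are accurate and fill in the details the paper leaves implicit.
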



\section{The weighted Hoffman-Spruck inequality for submanifolds}

Another consequence of Theorem \ref{teo-hardy} is the Hoffman-Spruck Inequality. Namely, fixed $\xi_0\in \bar M$, we assume $(\bar K)_{\rad}\le \m K(r)$ in $\bar M$,  where $r=r_{\xi_0}=d_{\bar M}(\cdot\,,\xi_0)$. Let $\m B$ be the geodesic ball in $\bar M$ centered at $\xi_0$ and radius $\bar r_0=\min\{\bar r_0(\m K),Inj_{\bar M}(\xi_0)\}$.  Since $M$ is compact and contained in $\m B$, it follows that $r^*=\max_{x\in M} r(x)<\bar r_0= \min\{\bar r_0(\m K), Inj_{\bar M}(\xi_0)\}$ and $h$ is increasing in $[0,\bar r_0)$. Hence, we may assume $M$ is contained in a ball $\m B_{r_0}(\xi_0)$, for some $0<r_0<\min\{\bar r_0(\m K),Inj_{\bar M}(\xi_0)\}$, , arbitrarily close to $\bar r_0$, satisfying $h'(r_0)>0$. In particular, $h'(r)>0$, for all $0\le r\le r_0$, since $h''=-\m K h\le 0$. Applying Theorem \ref{teo-hardy}, we conclude  $M$ cannot be minimal. On the other hand, notice that  $\eta=-\bar \na r$ is the unit normal vector to the boundary $\p \m B_{r_0}(\xi_0)$ pointing inward $\m B_{r_0}(\xi_0)$. By the Hessian comparison theorem (see (\ref{hes-comp})), the shape operator $A=-\bar\na\eta$ satisfies $A(v,v)=\hs_{r}(v,v)\ge \frac{h'(r_0)}{h(r_0)}> 0$, for all unit vector $v$ tangent to $\p \m B_{r_0}(\xi_0)$. Hence, the boundary $\p \m B_{r_0}(\xi_0)$ is convex. Since $\bar M$ does not admit any closed minimal submanifold inside $\m B$, by Section 6 of \cite{W}, there exists a constant $c>0$ depending only on $k$, satisfying 
\begin{equation*}
\vol(M)^{\frac{n-1}{n}}\le c \big(\vol(\p M)+\int_M |H|\big), 
\end{equation*} 
provided $n<7$, or $\vol(M)<\m D$, where $\m D$ depends only on $\m B$. Thus, by a straightforward calculus, one obtain 
\begin{equation} \label{HS-White-version}
[\int_M |\psi|^{p^*}]^\frac{p}{p^*} \le S \int_M (|\na^M \psi|^p + \frac{|\psi|^p|{H}|^p}{p^p}), 
\end{equation}
for all $1\le p<k$ and $\psi\in C^1(M)$, with $\psi=0$ on $\p M$, where $S$ depends only on $k$ and $p$.

By Hoffman and Spruck \cite {HS}, one see that $\m D$ depends only on $Inj_{\bar M}(M)$ and $\bar r_0(\m K)$. Namely, Hoffman and Spruck proved  the following.
\begin{theorem}\label{hoffman-spruck-teo} 
Assume  the sectional curvatures of $\bar M$ satisfy $\bar K \le b^2$,  for some constant $b\ge 0$. Then, there exists a constant $S>0$ satisfying
\begin{equation}\label{desig-hoff-sprk}
[\int_M |\psi|^{p^*}]^\frac{p}{p^*} \le S \int_M (|\na^M \psi|^p + \frac{|\psi|^p|{H}|^p}{p^p}), 
\end{equation}
for all $1\le p<k$ and $\psi\in C^1(M)$, with $\psi=0$ on $\p M$, provided there exists $z\in (0,1)$ satisfying
\begin{eqnarray}
\label{bar J} && \bar J_z:= [\frac{\om_k^{-1}}{1-z}\vol_M(\spp(\psi))]^{\frac{1}{k}}< \frac{1}{b}, \mbox{ if } b>0; \mbox{ and } \\
\label{h-bar J} && 2h^{-1}_b(\bar J_z)\le Inj_{\bar M}\,(\spp(\psi)),
\end{eqnarray}
where $h_b(t)=t$, with $t\in (0,\infty)$, if $b=0$, and 
$h_b(t)=\frac{1}{b}\sin(bt)$, with $t\in (0,\frac{\pi}{2b})$, if $b>0$ (In this case, $h^{-1}_b(t)=\frac{1}{b}\sin^{-1}(tb)$, with $t\in (0,\frac{1}{b})$).
Here, $\om_k$ is the volume of the standard unit ball $B_1(0)$ in $\real^k$, and $\mbox{Inj}\,(\spp(\psi))$ is the infimum of the injectivity radius of $\bar M$ restricted to the points of $\spp(\psi)$. Furthermore, the constant $S=S_{k,z}$ is given by
\begin{equation}\label{hoff-sprk-S}
S_{k,p,z}=\frac{\pi}{2}\, \frac{2^{k} k}{z(k-1)}\big(\frac{\om_k^{-1}}{1-z}\big)^{\frac{1}{k}}\,2^{p-1}[\frac{p(k-1)}{k-p}]^p.
\end{equation}
Moreover, if $b=0$, $S_{k,p,z}$ can be improved by taking $1$ instead $\frac{\pi}{2}$.
\end{theorem}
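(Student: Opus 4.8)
The plan is to follow the Michael--Simon mean-value scheme, in the Riemannian form due to Hoffman and Spruck, and to reduce the general exponent $p$ to the case $p=1$ only at the very end. The factor $2^{p-1}[\tfrac{p(k-1)}{k-p}]^p$ in \eqref{hoff-sprk-S} already signals this reduction: once the case $p=1$, namely the isoperimetric-type inequality
\[
\Bigl(\int_M |\psi|^{k/(k-1)}\Bigr)^{(k-1)/k}\le c\int_M\bigl(|\na^M\psi|+|\psi|\,|H|\bigr)
\]
for $\psi\in C^1(M)$ vanishing on $\p M$ is known, one applies it to $f=|\psi|^{\sigma}$ with $\sigma=\frac{p(k-1)}{k-p}$, chosen so that $f^{k/(k-1)}=|\psi|^{p^*}$. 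Estimating $|\na^M f|\le \sigma|\psi|^{\sigma-1}|\na^M\psi|$ and applying H\"older's inequality with exponents $p$ and $p/(p-1)$ (note $(\sigma-1)\tfrac{p}{p-1}=p^*$) produces exactly the weighted $L^p$ right-hand side of \eqref{desig-hoff-sprk}, the convexity bound $(a+b)^p\le 2^{p-1}(a^p+b^p)$ accounting for the factor $2^{p-1}$.

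Thus the whole problem concentrates on the $p=1$ inequality, which I would prove pointwise. Fix $\xi\in M$, put $r=d_{\bar M}(\cdot,\xi)$, and, exactly as in Section 2, set $X=h(r)\bar\na r$ with $h$ the solution of \eqref{cauchy-h} for $\m K=b^2$, so $h(t)=\frac1b\sin(bt)$ (and $h(t)=t$ when $b=0$). Applying the divergence identities of Lemma \ref{prop} to the field $\frac{\psi}{h(r)^{k-1}}X^{\top}$ and invoking the Hessian comparison \eqref{hes-comp}, which gives $\Div X\ge k\,h'(r)$ as in the proof of Lemma \ref{formula}, one obtains a differential inequality for the weighted mass $g(\rho)=\int_{M\cap\m B_\rho(\xi)}\psi$. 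Integrating it and using $h'(r)>0$ on the admissible range yields a mean-value estimate of the form
\[
\psi(\xi)\le C_k\int_{M\cap \m B_\rho(\xi)}\frac{|\na^M\psi|+|\psi|\,|H|}{h(r)^{k-1}},
\]
with the constant and the admissible radius $\rho$ controlled by comparison geometry.

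The hypotheses \eqref{bar J} and \eqref{h-bar J} are exactly what makes this argument legitimate on a single good ball. The condition $2h_b^{-1}(\bar J_z)\le Inj_{\bar M}(\spp\psi)$ keeps the competing balls inside the injectivity radius, where $r$ is smooth and \eqref{hes-comp} applies, while $\bar J_z<1/b$ keeps $h_b$ in its increasing regime, below the supremum $\bar r_0=\frac{\pi}{2b}$ of Example \ref{mK-constant}, so that the comparison volume profile is monotone. The number $\bar J_z$ is precisely the radius at which the comparison volume $\om_k\rho^k$ reaches $\frac1{1-z}\vol_M(\spp\psi)$, the quantity a Vitali covering of $\spp\psi$ by good balls requires; the loss factor $\frac1{1-z}$ produces the weight $(\frac{\om_k^{-1}}{1-z})^{1/k}$ and the $\frac1z$ in \eqref{hoff-sprk-S}. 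Feeding the pointwise bound into such a covering and summing converts it into the isoperimetric inequality above, and the trigonometric distortion of $h_b$ against the linear profile is the source of the factor $\frac\pi2$, which disappears when $b=0$ since then $h(t)=t$ and there is no distortion.

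The main obstacle is the Riemannian mean-value step together with the honest bookkeeping of constants it forces. In Euclidean space the density $\rho\mapsto \vol_M(M\cap\m B_\rho(\xi))/\rho^k$ is almost monotone by a clean first-variation computation; here one must replace $\rho^k$ by the profile built from $h$, absorb the curvature term coming from $\bar K\le b^2$, and check that every correction is dominated uniformly over the range of $\rho$ pinned down by \eqref{bar J}--\eqref{h-bar J}. Producing the explicit constant \eqref{hoff-sprk-S}, rather than merely some constant depending on $k$, $p$ and $b$, is where the delicate part lies; by comparison, the reduction to $p=1$ and the covering argument are routine.
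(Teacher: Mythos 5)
You should first be aware that the paper itself offers no proof of this statement: Theorem \ref{hoffman-spruck-teo} is quoted verbatim from Hoffman and Spruck \cite{HS}, so the only thing your attempt can be measured against is the original argument of \cite{MS} and \cite{HS}. Measured that way, your outline tracks the right architecture, and the reduction to $p=1$ is correctly set up: with $f=|\psi|^{\sigma}$, $\sigma=\frac{p(k-1)}{k-p}$, one indeed has $f^{k/(k-1)}=|\psi|^{p^*}$ and $(\sigma-1)\frac{p}{p-1}=p^*$, so H\"older plus $(a+b)^p\le 2^{p-1}(a^p+b^p)$ accounts exactly for the factor $2^{p-1}[\frac{p(k-1)}{k-p}]^p$ in \eqref{hoff-sprk-S}. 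That part of the proposal is routine and sound.

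The genuine gap is that the entire analytic content of the theorem is concentrated in the $p=1$ step, and you only describe it. Asserting that the divergence identities of Lemma \ref{prop} applied to $\frac{\psi}{h(r)^{k-1}}X^{\top}$ ``yield a mean-value estimate'' is not a proof: the actual argument requires (i) deriving the differential inequality for $\rho\mapsto h(\rho)^{-k}\int_{M\cap\m B_\rho(\xi)}\psi\,h'(r)$ (not for $g(\rho)=\int_{M\cap\m B_\rho(\xi)}\psi$ as written, whose comparison profile is wrong), (ii) integrating it to get an almost-monotonicity statement with an explicit curvature correction --- this is where the factor $\frac{\pi}{2}$ and the restriction $\bar J_z<\frac1b$ in \eqref{bar J} actually come from, via the distortion bound $\frac{bt}{\sin(bt)}\le\frac{\pi}{2}$ on $(0,\frac{\pi}{2b})$ --- and (iii) running the covering/selection argument that converts the pointwise lower density bound into \eqref{desig-hoff-sprk} with the precise weight $\frac{2^kk}{z(k-1)}(\frac{\om_k^{-1}}{1-z})^{1/k}$. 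You correctly identify where \eqref{bar J} and \eqref{h-bar J} enter and you candidly flag the constant-tracking as ``where the delicate part lies,'' but that is precisely the part that constitutes the theorem as stated; without it the proposal is a faithful roadmap to the Hoffman--Spruck proof rather than a proof. If your intent is merely to justify invoking the result, the honest course is to do what the paper does and cite \cite{HS} directly.
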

\begin{remark} The Hoffman-Spruck's Theorem above can be generalized for ambient spaces $\bar M$ satisfying $(\bar K_{\rad})_\xi \le \m K(r_\xi)$, for all $\xi\in \bar M$. The details and proof for this case can be found, for instance, in \cite{BM}.
\end{remark}

The constant $S_{k,p,z}$ as in (\ref{hoff-sprk-S}) reaches its minimum at $z=\frac{k}{k+1}$, hence we can take
\begin{eqnarray}\label{constant-S}
S &=& S_{k,p}=\min_{z\in (0,1)} S_{k,p,z} = \frac{\pi}{2} \, \frac{2^{k} k}{\frac{k}{k+1}(k-1)}\big(\om_k^{-1}(k+1)\big)^{\frac{1}{k}} \,2^{p-1}[\frac{p(k-1)}{k-p}]^p
\nonumber\\&=&  \frac{\pi}{2} \, \frac{2^{k}(k+1)^\frac{k+1}{k}}{k-1} \om_k^{-\frac{1}{k}}\,2^{p-1}[\frac{p(k-1)}{k-p}]^p,
\end{eqnarray}
provided  $\bar J= [\frac{k+1}{\om_k}\vol_M(\spp(\psi))]^\frac{1}{k}\le s_b$ and $2h^{-1}(\bar J)\le  Inj_{\bar M}(\spp(\psi))$. 
Thus, as a corollary of Theorem \ref{hoffman-spruck-teo} and (\ref{HS-White-version}), one has

\begin{proposition}\label{HS-compact} Fixed  $\xi_0\in \bar M$, assume $(\bar K_{\rad})_{\xi_0}\le \m K(r_{\xi_0})$. 
Assume $M$ is contained in $\m B=\m B_{r_0}(\xi_0)$, with $r_0=\min\{\bar r_0(\m K), Inj_{\bar M}(\xi_0)\}$.   Then, for all $1\le p<k$, there exists $S>0$, depending only on $k$ and $p$, such that, for all $\psi\in C^1(M)$ with $\psi=0$ on $\p M$, it holds
\begin{equation*}
[\int_M |\psi|^{p^*}]^\frac{p}{p^*} \le S \int_M (|\na^M \psi|^p + \frac{|\psi|^p|{H}|^p}{p^p}), 
\end{equation*}
provided either $k<7$, or $\vol_M(\spp(\psi))<\m D$, where $0<\m D\le +\infty$, depends only on $Inj_{\bar M}(M)$ and $r_0$.
\end{proposition}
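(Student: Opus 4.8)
The plan is to obtain the inequality by combining the two results recorded above: White's isoperimetric estimate (Section~6 of \cite{W}), which yields (\ref{HS-White-version}) when $k<7$, and the $\m K$-comparison form of the Hoffman--Spruck Theorem~\ref{hoffman-spruck-teo}, which covers arbitrary $k$ at the price of a smallness condition on $\vol_M(\spp(\psi))$. Throughout, $p^{*}=\frac{kp}{k-p}$ is the Sobolev conjugate of $p$. The geometric hypotheses needed for both routes have already been secured in the discussion preceding the statement: the Hessian comparison (\ref{hes-comp}) forces the shape operator of $\p\m B_{r_0}(\xi_0)$ to be at least $\frac{h'(r_0)}{h(r_0)}>0$, so $\p\m B$ is convex and $\bar M$ carries no closed minimal submanifold inside $\m B=\m B_{r_0}(\xi_0)$; moreover, since $M$ is compact and contained in the open ball, $r^{*}=\max_M r<r_0$, so we may shrink $r_0$ slightly to guarantee $h'(r_0)>0$ without losing $\spp(\psi)\subset M\subset\m B$.

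First I would dispose of the alternative $k<7$. Here White's isoperimetric inequality holds with no restriction on $\vol_M(M)$, giving a constant $c=c(k)$ with $\vol_M(M)^{(k-1)/k}\le c(\vol_{\p M}(\p M)+\int_M|H|)$. Applying this to the super-level sets of $|\psi|$ and integrating in the level parameter (the Federer--Fleming/coarea truncation abbreviated above as ``a straightforward calculus'') upgrades it to the Sobolev form (\ref{HS-White-version}), which is precisely the claimed inequality with a constant $S=S(k,p)$. Since $\psi=0$ on $\p M$, this settles the first case.

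For arbitrary $k$ under $\vol_M(\spp(\psi))<\m D$, I would instead invoke Theorem~\ref{hoffman-spruck-teo} in its $\m K$-comparison version (the Remark following it, \cite{BM}), choosing the optimal parameter $z=\frac{k}{k+1}$ so that the constant collapses to $S=S_{k,p}$ of (\ref{constant-S}). What must be verified is the pair of hypotheses (\ref{bar J})--(\ref{h-bar J}). With this $z$ one has $\bar J=[\frac{k+1}{\om_k}\vol_M(\spp(\psi))]^{1/k}$, and since $\spp(\psi)\subset M\subset\m B$ we have $Inj_{\bar M}(\spp(\psi))\ge Inj_{\bar M}(M)$. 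Hence it suffices to set
\[
\m D=\frac{\om_k}{k+1}\,h\big(\min\{\tfrac12 Inj_{\bar M}(M),\ \bar r_0\}\big)^{k},
\]
with the convention $h(\bar r_0)=\bar s_0$; then $\vol_M(\spp(\psi))<\m D$ forces $\bar J<\bar s_0$ and $2h^{-1}(\bar J)\le Inj_{\bar M}(M)\le Inj_{\bar M}(\spp(\psi))$, that is, exactly (\ref{bar J}) and (\ref{h-bar J}). Because $h$, $\bar r_0$ and $\bar s_0$ are determined by the comparison function $\m K$ and the ball $\m B$, this threshold $\m D$ depends only on $Inj_{\bar M}(M)$ and $r_0$, as required.

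The main obstacle is the bookkeeping of the second case: passing from the constant-curvature statement of Theorem~\ref{hoffman-spruck-teo} to the $\m K$-comparison version valid inside $\m B$, and checking that the two a priori independent requirements (\ref{bar J}) and (\ref{h-bar J}) are simultaneously guaranteed by the single explicit volume threshold $\m D$ expressed through $Inj_{\bar M}(M)$ and $r_0$. The truncation step in the first case---turning White's isoperimetric inequality into (\ref{HS-White-version})---is routine but should be written out to confirm that its constant depends only on $k$ and $p$.
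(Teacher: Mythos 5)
Your argument is correct and follows essentially the same route as the paper: White's isoperimetric inequality (Section 6 of \cite{W}) handles $k<7$ after the Federer--Fleming truncation, and the Hoffman--Spruck Theorem \ref{hoffman-spruck-teo} with the optimal choice $z=\frac{k}{k+1}$ handles the volume-restricted case, using that $\p\m B_{r_0}(\xi_0)$ is convex by Hessian comparison. Your explicit threshold $\m D=\frac{\om_k}{k+1}\,h\bigl(\min\{\tfrac12 Inj_{\bar M}(M),\bar r_0\}\bigr)^{k}$ correctly packages the two hypotheses (\ref{bar J})--(\ref{h-bar J}) into a single condition depending only on $Inj_{\bar M}(M)$ and $r_0$, a detail the paper leaves implicit.
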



\begin{example} \label{cartan-hadamard-example}
Assume $\bar M$ is a Cartan-Hadamard manifold. Then, it holds $\bar r_0(\m K)=Inj_{\bar M}(\xi_0)=Inj_{\bar M}\,(M)=\infty$. Hence, we can take $\m D=+\infty$ in Proposition \ref{HS-compact}.
\end{example}

Now, we use Theorem \ref{teo-hardy} together with Theorem \ref{hoffman-spruck-teo}, in order to obtain a weighted Hoffman-Spruck inequality for submanifolds in  manifolds. 

\begin{theorem}\label{teo-sob} Fixed  $\xi_0\in \bar M$, assume $(\bar K_{\rad})_{\xi_0}\le \m K(r)$, where $r=d_{\bar M}(\cdot\,,\xi_0)$. 
Assume $M$ is contained in $\m B=\m B_{r_0}(\xi_0)$, with $r_0=\min\{\bar r_0(\m K), Inj_{\bar M}(\xi_0)\}$.  Then, for all $\psi\in C^1(M)$, with $\psi=0$ on $\p M$, it holds
\begin{eqnarray*}
\frac{1}{S}[\int_M\frac{|\psi|^{p^*}}{{h(r)}^{p^*\al}}]^{p/p^*} + \Phi_{k,p,\al}\int_M \frac{|\psi|^p{h'(r)}|\bar\na r^\perp|^2}{{h(r)}^{p(\al+1)}} +  \De_{k,p,\al} \int_M \frac{|\psi|^p{h'(r)}|\bar\na r^\perp|^p}{{h(r)}^{p(\al+1)}} 
\\&& \hspace{-6.2cm} \le \Ga_{k,p,\al} \int_M[\frac{|\na^M\psi|^p}{{h(r)}^{p\al}}+ \frac{|\psi|^p|H|^p}{p^p{h(r)}^{p\al}}],
\end{eqnarray*}
provided either $k<7$, or $\vol(M)<\m D$, where $0<\m D\le +\infty$ depends only $inj_{\bar M}(M)$ and $r_0$.  Here, $p^*=\frac{kp}{k-p}$, $S>0$ depends only on $k$ and $p$, and
\begin{eqnarray*}
\Ga_{k,p,\al}&=& \m A_p \big[1+|\al|^{\frac{2p}{2+p}}2^\frac{|p-2|}{(p+2)}{h'(r_0)}^{\frac{2(1-p)}{2+p}}(\frac{p}{k-\ga})^\frac{2p}{p+2}\,\big]^{\frac{p+2}{2}}\\
&=& h'(r_0)^{1-p}\m A_p \big[h'(r_0)^\frac{2(p-1)}{p+2}+|\al|^{\frac{2p}{2+p}}2^\frac{|p-2|}{(p+2)}(\frac{p}{k-\ga})^\frac{2p}{p+2}\,\big]^{\frac{p+2}{2}}\\
\Phi_{k,p,\al}&=& 2^{\frac{|p-2|}{2}}\frac{\ga p}{k-\ga} (|\al|^{\frac{2p}{2+p}}+2^{\frac{-|p-2|}{p+2}}h'(r_0)^{\frac{2(p-1)}{p+2}}(\frac{p}{k-\ga})^{\frac{-2p}{p+2}})^{\frac{p}{2}}|\al|^{\frac{2p}{2+p}}\\
\De_{k,p,\al}&=& \m A_p(|\al|^{\frac{2p}{2+p}}+2^{\frac{-|p-2|}{p+2}}h'(r_0)^{\frac{2(p-1)}{p+2}}(\frac{p}{k-\ga})^{\frac{-2p}{p+2}})^{\frac{p}{2}}|\al|^{\frac{2p}{2+p}},
\end{eqnarray*}
where $\ga=p(\al+1)$ and $\m A_p=\max\{1,2^{\frac{p-2}{2}}\}$.
\end{theorem}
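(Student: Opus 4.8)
The plan is to derive the weighted inequality by feeding the function $\phi=\psi/h(r)^\al$ (where $\al=\ga/p-1$, so that $\ga=p(\al+1)<k$) into the unweighted Hoffman--Spruck inequality of Proposition \ref{HS-compact}, and then to absorb the error terms produced by differentiating the weight by means of the Hardy inequality of Theorem \ref{teo-hardy}. Since $\psi=0$ on $\p M$, so is $\phi$; hence Proposition \ref{HS-compact} applies to $\phi$ and the boundary integral in Theorem \ref{teo-hardy} drops out. When $\xi_0\in M$ the weight $h(r)^{-\al}$ is singular at the finitely many points of $\m Z_0=\{x\in M\mid f(x)=\xi_0\}$; there I would run the identical cut-off/approximation argument used in the proof of Proposition \ref{teo-hardy-norm}, so that it suffices to argue on $M\cap\m B^*$, where $h(r)>0$.

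\emph{First step.} Applying Proposition \ref{HS-compact} to $\phi$ gives at once
$$\frac1S\Big[\int_M\frac{|\psi|^{p^*}}{h(r)^{p^*\al}}\Big]^{p/p^*}\le\int_M|\na^M\phi|^p+\int_M\frac{|\psi|^p|H|^p}{p^p\,h(r)^{p\al}},$$
since $|\phi|^{p^*}=|\psi|^{p^*}h(r)^{-p^*\al}$ and $\frac{|\phi|^p|H|^p}{p^p}=\frac{|\psi|^p|H|^p}{p^p}h(r)^{-p\al}$. From $\na^M\phi=h(r)^{-\al}\na^M\psi-\al h'(r)h(r)^{-\al-1}\psi\,(\bar\na r)^\top$ I would write $|\na^M\phi|^2=|u+w|^2$, with $|u|^2=|\na^M\psi|^2h(r)^{-2\al}$ and $|w|^2=\al^2(h'(r))^2\psi^2|(\bar\na r)^\top|^2h(r)^{-2\al-2}$, bound $|u+w|^2\le(1+\la)|u|^2+(1+\la^{-1})|w|^2$ for a free parameter $\la>0$, raise to the power $p/2$, and split via (\ref{num-property}) (which introduces the factor $\m A_p=\max\{1,2^{(p-2)/2}\}$). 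This yields
$$|\na^M\phi|^p\le\m A_p(1+\la)^{p/2}\frac{|\na^M\psi|^p}{h(r)^{p\al}}+\m A_p(1+\la^{-1})^{p/2}|\al|^p\frac{(h'(r))^p|\psi|^p|(\bar\na r)^\top|^p}{h(r)^{\ga}}.$$

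\emph{Second step: the radial term.} Using that $h'$ is non-increasing with $h'(0)=1$ (so $(h'(r))^p\le h'(r)$), the identity $|(\bar\na r)^\top|^2=1-|(\bar\na r)^\perp|^2$, and (\ref{num-property}) applied to the pair $(|(\bar\na r)^\top|,|(\bar\na r)^\perp|)$, I would bound $|(\bar\na r)^\top|^p\le\m A_p'-|(\bar\na r)^\perp|^p$, where $\m A_p'=\min\{1,2^{(p-2)/2}\}^{-1}$ (note $\m A_p\m A_p'=2^{|p-2|/2}$, the prefactor seen in $\Phi_{k,p,\al}$). The resulting $-|(\bar\na r)^\perp|^p$ contribution is precisely the source, after the optimization below, of the $\De_{k,p,\al}$-term. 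The remaining dominant radial piece is a multiple of $\int_M\frac{h'(r)|\psi|^p}{h(r)^\ga}$, which I would bound using Theorem \ref{teo-hardy} with this $\ga$ (boundary term gone), obtaining a multiple of $\int_M[\frac{|\na^M\psi|^p}{h(r)^{p\al}}+\frac{|\psi|^p|H|^p}{p^ph(r)^{p\al}}]$ minus the nonnegative quantity $\frac{\ga p}{k-\ga}\int_M\frac{h'(r)|\psi|^p|(\bar\na r)^\perp|^2}{h(r)^\ga}$; this last quantity, carried through the substitution, is the source of $\Phi_{k,p,\al}$. The factor $h'(r_0)^{1-p}$ that enters upon solving the Hardy estimate for $\int_M\frac{h'(r)|\psi|^p}{h(r)^\ga}$ is what installs the powers of $h'(r_0)$ into all three constants.

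\emph{Final step and main obstacle.} Collecting the two contributions to the coefficient of $\int_M\frac{|\na^M\psi|^p}{h(r)^{p\al}}$---namely $\m A_p(1+\la)^{p/2}$ from the direct gradient and the Hardy-routed multiple of $(1+\la^{-1})^{p/2}|\al|^p$---I would minimize over $\la>0$ using the identity $\min_{\la>0}[(1+\la)^{p/2}A+(1+\la^{-1})^{p/2}D]=(A^{2/(p+2)}+D^{2/(p+2)})^{(p+2)/2}$, that is, Young's inequality with conjugate exponents $\frac{p+2}{2}$ and $\frac{p+2}{p}$. This produces exactly the outer exponent $\frac{p+2}{2}$ and the inner exponent $\frac{2p}{p+2}$ on $|\al|$ visible in $\Ga_{k,p,\al}$, and the same optimal $\la$ fixes the common factor $(|\al|^{2p/(2+p)}+B)^{p/2}$ shared by $\Phi_{k,p,\al}$ and $\De_{k,p,\al}$, where $B=2^{-|p-2|/(p+2)}h'(r_0)^{2(p-1)/(p+2)}(\tfrac{p}{k-\ga})^{-2p/(p+2)}$. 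The main obstacle is exactly this bookkeeping: one must keep the gradient term and the $|H|$-term travelling together under a single constant, which dictates using the norm form of the Hardy estimate (Proposition \ref{teo-hardy-norm}, extended from $\psi\ge0$ to general $\psi$ by the approximation $\psi_\ep=(\psi^2+\ep^2)^{1/2}$ exactly as in Theorem \ref{teo-hardy}), correctly separate the roles of $\m A_p$ and $\m A_p'$ arising from the two uses of (\ref{num-property}), and verify that the single optimal $\la$ simultaneously reproduces all three stated constants. The remainder is a routine, if lengthy, computation.
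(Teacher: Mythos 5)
Your proposal is correct and follows essentially the same route as the paper's proof: apply the unweighted inequality of Proposition \ref{HS-compact} to $\psi/h(r)^{\al}$, expand the gradient with a free Young parameter (your $\la$ is the paper's $|\al|/\ep^{2}$), convert $|\bar\na r^{\top}|^{p}$ via (\ref{num-property}) into $\m B_{p}-|\bar\na r^{\perp}|^{p}$, absorb the resulting radial term with Theorem \ref{teo-hardy}, optimize the parameter (your closed-form minimum $(A^{2/(p+2)}+D^{2/(p+2)})^{(p+2)/2}$ agrees with the paper's critical-point computation), and handle $\xi_{0}\in M$ by the same cut-off argument. The only cosmetic discrepancy is your remark that one needs the ``norm form'' of the Hardy estimate: the paper in fact uses the separated form of Theorem \ref{teo-hardy} (whose constant already carries $\m A_{p}$), and both choices lead to the stated constants.
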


\begin{proof}  First, we assume $\xi_0\notin M$. 
Then,  $r=d_{\bar M}(\cdot\,,\xi_0)>0$ on $M$, hence  $\frac{\psi}{{h(r)}^\al}$ is a $C^1$ function on $M$ vanishing on $\p M$. By Proposition \ref{HS-compact}, there is a constant $S>0$, depending only on $k$ and $p$, such that, for all $\psi\in C^1(M)$ with $\psi=0$ on $\p M$, the following inequality holds 
\begin{equation}\label{Michael-Simon-psi}
[\int_M \frac{|\psi|^{p^*}}{{h(r)}^{p^*\al}}]^{\frac{p}{p^*}}\le S\int_M [|\na^M(\frac{\psi}{{h(r)}^\al})|^p + \frac{|\psi|^p|H|^p}{p^p{h(r)}^{p\al}}], 
\end{equation}
provided $k<7$ or $\vol(M)\le \m D$, where  $\m D$, depends only on $r_0$ and $Inj_{\bar M}(M)$.

Using that $\na^M (\frac{\psi}{{h(r)}^\al}) = \frac{\na^M\psi}{{h(r)}^\al} - \frac{\al\psi h'(r)\bar\na r^\top}{{h(r)}^{\al+1}}$,  by the Young inequality, 
\begin{eqnarray}
|\na^M(\frac{\psi}{{h(r)}^\al})|^2 &=& \frac{\al^2\psi^2 h'(r)^2|\bar\na r^\top|^2}{{h(r)}^{2\al+2}} + (\frac{-\al}{{h(r)}^{2\al}}) ~ 2\lan\na^M\psi, \frac{\psi h'(r)\bar\na r^\top}{{h(r)}}\ran
\nonumber\\&& + \ \frac{|\na^M\psi|^2}{{h(r)}^{2\al}} 
\nonumber\\ &\le& (\al^2+|\al|\ep^2)\frac{\psi^2 h'(r)^2|\bar\na r^\top|^2}{{h(r)}^{2\al+2}} + (1+\frac{|\al|}{\ep^2})\frac{|\na^M\psi|^2}{{h(r)}^{2\al}}
\nonumber\\ &=& (|\al|+\ep^2)\big[\frac{|\al|\psi^2 h'(r)^2|\bar\na r^\top|^2}{{h(r)}^{2(\al+1)}}+ \frac{|\na^M\psi|^2}{\ep^2 {h(r)}^{2\al}}\big],
\end{eqnarray}
for all $\ep>0$. Hence, using (\ref{num-property}), 
\begin{eqnarray}\label{a0}
|\na^\Si(\frac{\psi}{{h(r)}^\al})|^p &=& (|\na^M(\frac{\psi}{{h(r)}^\al})|^2)^{\frac{p}{2}} 
\nonumber\\ &\le& \m A_p (|\al|+\ep^2)^\frac{p}{2}[\frac{|\al|^\frac{p}{2} \psi^p {h'(r)}|\bar\na r^\top|^p}{{h(r)}^{p(\al+1)}} + \frac{|\na^M\psi|^p}{\ep^p {h(r)}^{p\al}}]
\label{h'-without-p}\\&\le& \m A_p (|\al|+\ep^2)^\frac{p}{2}[\frac{|\al|^\frac{p}{2} \psi^p {h'(r)}}{{h(r)}^{p(\al+1)}}(\m B_p  - |\bar\na r^\perp|^p) + \frac{|\na^M\psi|^p}{\ep^p {h(r)}^{p\al}}]
\\&& \hspace{-2cm}= \m A_p (|\al|+\ep^2)^\frac{p}{2}[\frac{(\m B_p |\al|^\frac{p}{2} \psi^p {h'(r)}}{{h(r)}^{p(\al+1)}} - \frac{|\al|^\frac{p}{2} \psi^p {h'(r)} |\bar\na r^\perp|^p)}{{h(r)}^{p(\al+1)}} + \frac{|\na^M\psi|^p}{\ep^p {h(r)}^{p\al}}]. \nonumber
\end{eqnarray}
where $\m A_p = \max\{1,2^\frac{p-2}{2}\}$ and $\m B_p=\max\{1,2^{\frac{2-p}{2}}\}$. 
Inequality (\ref{h'-without-p}) holds since $h''\le 0$, hence $h'(r)\le h'(0)=1$ and Inequality (\ref{a0}) holds since, by (\ref{num-property}), one has $|\bar\na r^\top|^p+|\bar\na r^\perp|^p\le \max\{1,2^{\frac{2-p}{2}}\}$. Thus, using $(\ref{Michael-Simon-psi})$ and $(\ref{a0})$,  we obtain
\begin{eqnarray*}
\frac{1}{S}[\int_M\frac{\psi^{p^*}}{{h(r)}^{p^*\al}}]^{p/p^*} &\le &  \int_M \frac{\psi^p|H|^p}{p^p{h(r)}^{p\al}} \ + \ 
\\&& \hspace{-4.2cm} \m A_p(|\al|+\ep^2)^{\frac{p}{2}} \big[\,|\al|^{\frac{p}{2}}\m B_p \int_M \frac{\psi^p {h'(r)}}{{h(r)}^{p(\al+1)}} + \frac{1}{\ep^p}
 \int_M \frac{|\na^M\psi|^p}{{h(r)}^{p\al}} - |\al|^{\frac{p}{2}} \int_M \frac{\psi^p {h'(r)} |\bar\na r^\perp|^p}{{h(r)}^{p(\al+1)}}\big]. 
\end{eqnarray*}

On the other hand, by using Theorem \ref{teo-hardy}, 
\begin{equation*}\label{a1}
\int_M~\frac{\psi^p {h'(r)}}{{h(r)}^{p(\al+1)}} ~ \le A_{k,p,\al} \int_M [\frac{|\na^M \psi|^p}{{h(r)}^{p\al}}+\frac{\psi^p|H|^p}{p^p{h(r)}^{p\al}}] - B_{k,p,\al} \int_M \frac{\psi^p {h'(r)}|\bar\na r^\perp|^2}{{h(r)}^{p(\al+1)}},
\end{equation*}
where $A_{k,p,\al} = \frac{\m A_p}{h'(r_0)^{p-1}}\frac{p^p}{(k-\ga)^p}$ and $B_{k,p,\al} =  \frac{p^p h'(r_0)^{1-p}}{(k-\ga)^p}\frac{\ga[(k-\ga)h'(r_0)]^{p-1}}{p^{p-1}}=\frac{\ga p}{k-\ga}$. 
Thus, it holds
\begin{eqnarray*}
\frac{1}{S}[\int_M\frac{\psi^{p^*}}{{h(r)}^{p^*\al}}]^{p/p^*} &\le& \ C_{k,\al,p,\ep} \int_M \frac{|\na^M \psi|^p}{{h(r)}^{p\al}} + D_{k,p,\al,\ep} \int_M \frac{\psi^p|H|^p}{p^p{h(r)}^{p\al}}\\
&&\ - \ E_{k,p,\al,\ep} \int_M \frac{\psi^p|\bar\na r^\perp|^2}{{h(r)}^{p(\al+1)}} - F_{k,p,\al,\ep}  \int_M \frac{\psi^p|\bar\na r^\perp|^p}{{h(r)}^{p(\al+1)}}
\nonumber,  
\end{eqnarray*}
where
\begin{eqnarray*}
C_{k,p,\al,\ep} &=& \m A_p(|\al|+\ep^2)^{\frac{p}{2}}(\,|\al|^{\frac{p}{2}}\m B_p A_{k,p,\al} + \ep^{-p}) \\&=& \m A_p[(|\al|+\ep^2)^{\frac{p}{2}}\,|\al|^{\frac{p}{2}}\m B_p A_{k,p,\al} + (1+ |\al|\ep^{-2})^{\frac{p}{2}}]
\\
D_{k,p,\al,\ep} &=& \m A_p(|\al|+\ep^2)^{\frac{p}{2}}|\al|^{\frac{p}{2}}\m B_p A_{k,p,\al} + 1 \le C_{k,p,\al,\ep}
\\
E_{k,p,\al,\ep} &=& \m A_p(|\al|+\ep^2)^{\frac{p}{2}}|\al|^{\frac{p}{2}} \m B_p B_{k,p,\al}.
\\
F_{k,p,\al,\ep} &=& \m A_p(|\al|+\ep^2)^{\frac{p}{2}}|\al|^{\frac{p}{2}}.
\end{eqnarray*}
Consider the function $k(\ep)= C_{k,p,\al,\ep}$. We have 
\begin{eqnarray*}
\m A_p^{-1}k'(\ep) &=& \frac{p}{2}(|\al|+\ep^2)^{\frac{p}{2}-1}\, 2\ep \,|\al|^{\frac{p}{2}}\m B_p A_{k,p,\al} + \frac{p}{2}(1+|\al|\ep^{-2})^{\frac{p}{2}-1}(-2|\al|\ep^{-3})
\\&=& p(|\al|+\ep^2)^{\frac{p}{2}-1}\, [\ep \,|\al|^{\frac{p}{2}}\m B_p A_{k,p,\al} - \ep^{2-p}|\al|\ep^{-3}]
\\&=& p(|\al|+\ep^2)^{\frac{p}{2}-1}\ep\,|\al| [\,|\al|^{\frac{p-2}{2}}\m B_p A_{k,p,\al} - \ep^{-2-p}].
\end{eqnarray*}
Thus, $k'(\ep)=0$ iff $\ep^{-2-p} = |\al|^{\frac{p-2}{2}}\m B_p A_{k,p,\al}$, i.e., $\ep=[|\al|^{\frac{p-2}{2}}\m B_p A_{k,p,\al}]^{\frac{-1}{p+2}}$. 
Hence, it simple to see  $k(\ep)$ reachs its minimum at $\ep_0=[|\al|^{\frac{p-2}{2}}\m B_p A_{k,p,\al}]^{\frac{-1}{p+2}}$. We obtain 
\begin{eqnarray*}
\Ga_{k,p,\al}&:=& C_{k,p,\al,\ep_0} = \m A_p(|\al|+\ep_0^2)^{\frac{p}{2}}(\,|\al|^{\frac{p}{2}}\m B_p A_{k,p,\al} + \ep_0^{-p})
\\&=& \m A_p(|\al|+[|\al|^\frac{p-2}{2}\m B_p A_{k,p,\al}]^{\frac{-2}{p+2}}\,)^{\frac{p}{2}}(\,|\al|^{\frac{p}{2}}\m B_p A_{k,p,\al} + [|\al|^{\frac{p-2}{2}}\m B_p A_{k,p,\al}]^{\frac{p}{p+2}})
\\&=& \m A_p|\al|^{\frac{p}{2}}(1+|\al|^{\frac{-2p}{2+p}}[\m B_p A_{k,p,\al}]^{\frac{-2}{p+2}}\,)^{\frac{p}{2}}(\,1 + |\al|^{\frac{-2p}{p+2}}[\m B_p A_{k,p,\al}]^{\frac{-2}{p+2}}) |\al|^{\frac{p}{2}}\m B_pA_{k,p,\al} 
\\&=& \m A_p|\al|^{\frac{p}{2}}(1+|\al|^{\frac{-2p}{2+p}}[\m B_p A_{k,p,\al}]^{\frac{-2}{p+2}}\,)^{\frac{p+2}{2}}|\al|^{\frac{p}{2}}\m B_pA_{k,p,\al}
\\&=& \m A_p|\al|^{\frac{p}{2}} |\al|^{-p}[\m B_pA_{k,p,\al}]^{-1} (1+|\al|^{\frac{2p}{2+p}}[\m B_p A_{k,p,\al}]^{\frac{2}{p+2}}\,)^{\frac{p+2}{2}}|\al|^{\frac{p}{2}}\m B_pA_{k,p,\al}
\\&=&  \m A_p (1+|\al|^{\frac{2p}{2+p}}[\m B_p A_{k,p,\al}]^{\frac{2}{p+2}}\,)^{\frac{p+2}{2}}
\\&=& \m A_p \big[1+|\al|^{\frac{2p}{2+p}}2^\frac{|p-2|}{(p+2)}{h'(r_0)}^{\frac{2(1-p)}{2+p}}(\frac{p}{k-\ga})^\frac{2p}{p+2}\,\big]^{\frac{p+2}{2}}.
\end{eqnarray*}
The last equality holds since $\m A_p \m B_p = \max\{1,2^{\frac{p-2}{2}}\} \max\{1,2^{\frac{2-p}{2}}\}=2^{\frac{|p-2|}{2}}$. We also have
\begin{eqnarray*}
\Phi_{k,p,\al} &:=& E_{k,p,\al,\ep_0}=\m A_p(|\al|+\ep_0^2)^{\frac{p}{2}}|\al|^{\frac{p}{2}} \m B_p B_{k,p,\al}
\\&=& \m A_p(|\al|+[|\al|^{\frac{p-2}{2}}\m B_p A_{k,p,\al}]^{\frac{-2}{p+2}})^{\frac{p}{2}}|\al|^{\frac{p}{2}} \m B_p B_{k,p,\al}
\\&=& \m A_p(|\al|^2+|\al|^{\frac{4}{2+p}}[\m B_p A_{k,p,\al}]^{\frac{-2}{p+2}})^{\frac{p}{2}}\m B_p B_{k,p,\al}
\\&=& \m A_p (|\al|^{\frac{2p}{2+p}}+[\m B_p A_{k,p,\al}]^{\frac{-2}{p+2}})^{\frac{p}{2}}|\al|^{\frac{2p}{2+p}}\m B_p B_{k,p,\al}
\\&=& 2^{\frac{|p-2|}{2}}\frac{\ga p}{k-\ga} (|\al|^{\frac{2p}{2+p}}+2^{\frac{-|p-2|}{p+2}}h'(r_0)^{\frac{2(p-1)}{p+2}}(\frac{p}{k-\ga})^{\frac{-2p}{p+2}})^{\frac{p}{2}}|\al|^{\frac{2p}{2+p}}
\end{eqnarray*}
and
\begin{eqnarray*}
\De_{k,p,\al,\ep} &:=& F_{k,p,\al,\ep} = \m A_p(|\al|+\ep_0^2)^{\frac{p}{2}}|\al|^{\frac{p}{2}}
\\&=& \m A_p (|\al|^{\frac{2p}{2+p}}+[\m B_p A_{k,p,\al}]^{\frac{-2}{p+2}})^{\frac{p}{2}}|\al|^{\frac{2p}{2+p}}
\\&=& \max\{1,2^{\frac{p-2}{2}}\}(|\al|^{\frac{2p}{2+p}}+2^{\frac{-|p-2|}{p+2}}h'(r_0)^{\frac{2(p-1)}{p+2}}(\frac{p}{k-\ga})^{\frac{-2p}{p+2}})^{\frac{p}{2}}|\al|^{\frac{2p}{2+p}}.
\end{eqnarray*}
Thus, it follows that 
\begin{eqnarray*}
\frac{1}{S}[\int_M\frac{\psi^{p^*}}{{h(r)}^{p^*\al}}]^{p/p^*} &\le& \Ga_{k,p,\al} \int_M [\frac{|\na^M \psi|^p}{{h(r)}^{p\al}}+\frac{\psi^p|H|^p}{p^p{h(r)}^{p\al}}]\\
&& \hspace{-1cm} - \ \Phi_{k,p,\al} \int_M \frac{\psi^p{h'(r)}|\bar\na r^\perp|^2}{{h(r)}^{p(\al+1)}} - \De_{k,p,\al}\int_M \frac{\psi^p{h'(r)}|\bar\na r^\perp|^p}{{h(r)}^{p(\al+1)}}
\nonumber,  
\end{eqnarray*}

Now, assume $\xi_0\in M$. As we have observed in the proof of Theorem \ref{teo-hardy-norm}, it holds $\vol_M ([r<2\de]\cap M)=O(\de^k)$ and $\vol_{\p M} ({\p M\cap [r<2\de]})=O(\de^{k-1})$, as $\de>0$ goes to $0$.

Consider the cut-off function $\eta=\eta_\de\in C^\infty(M)$ satisfying:
\begin{eqnarray}\label{cut-off}
&& 0\le \eta \le 1, \mbox{ in } M; \nonumber\\
&&\eta = 0, \mbox{ in } [\,r<\de]\cap M, \ \mbox{ and } \eta = 1, \mbox{ in } [\,r>2\de]\cap M;\\
&& |\na^M \eta|\le L/\de, \nonumber
\end{eqnarray}
for some $L>1$ that does not depend on $\de$ and $\eta$.

Let $\phi=\eta\psi$. Since $\phi\in C^1(M)$ and   $\xi_0\notin M'=\spp(\phi)$, it holds
\begin{eqnarray*}
\frac{1}{S}[\int_M\frac{\phi^{p^*}}{{h(r)}^{p^*\al}}]^{p/p^*} + \Phi_{k,p,\al}\int_M \frac{\phi^p{h'(r)}|\bar\na r^\perp|^2}{{h(r)}^{p(\al+1)}} + \De_{k,p,\al}\int_M \frac{\phi^p{h'(r)}|\bar\na r^\perp|^p}{{h(r)}^{p(\al+1)}}
\\&& \hspace{-6.0cm} \le \Ga_{k,p,\al} \int_M[\frac{|\na^M\phi|^p}{{h(r)}^{p\al}}+ \frac{\phi^p|H|^p}{p^p{h(r)}^{p\al}}].
\end{eqnarray*}
Notice that, 
\begin{eqnarray}\label{est-grad-0-phi-sob-1}
\int_M \frac{|\na^M\phi|^p}{{h(r)}^{\al p}} &=& \int_M \frac{|\eta \na^M\psi + \psi \na^M \eta|^p}{{h(r)}^{\al p}}
\nonumber\\&=& \int_{[r>2\de]\cap M} \frac{|\na^M\psi|^p}{{h(r)}^{\al p}} + \int_{[\de<r<2\de]\cap M} \frac{|\eta \na^M\psi + \psi \na^M \eta|^p}{{h(r)}^{\al p}},
\end{eqnarray}
and, since $h(\de)=O(\de)$, as $\de\to 0$, 
\begin{eqnarray}\label{est-grad-0-phi-sob-2}
\int_{[\de<r<2\de]\cap M} \frac{|\eta \na^M\psi + \psi \na^M \eta|^p}{{h(r)}^{\al p}} &\le& \int_{[\de<r<2\de]} (|\na^M\psi|^p+ \psi^p O(\frac{1}{\de^p}))O(\frac{1}{\de^{\al p}})
\nonumber\\&=& \int_{\{\de<r<2\de\}}(O(\frac{1}{\de^{\al p}}) + O(\frac{\de^{-p}}{\de^{\al p}}))
\nonumber\\&=& (O(\frac{1}{\de^{\al p}}) + O(\frac{\de^{-p}}{\de^{\al p}}))O(\de^{k}) 
\nonumber\\&=& O(\de^{k-\al p}) + O(\de^{k-p(\al+1)}) = O(\de^{k-\ga}),
\end{eqnarray}
as $\de\to 0$, since $k-\ga>0$. Hence, 
\begin{eqnarray*}
\frac{1}{S}[\int_{[r>2\de]\cap M} \frac{\psi^{p^*}}{{h(r)}^{p^*\al}}]^{p/p^*} 
&&\\&& \hspace{-3cm} + \ \Phi_{k,p,\al}\int_{[r>2\de]\cap M} \frac{\psi^p{h'(r)}|\bar\na r^\perp|^2}{{h(r)}^{p(\al+1)}} + \De_{k,p,\al}\int_{[r>2\de]\cap M} \frac{\psi^p {h'(r)}|\bar\na r^\perp|^p}{{h(r)}^{p(\al+1)}} \\ && \hspace{-3cm}\le \ \Ga_{k,p,\al} \int_{M}[\frac{|\na^M\psi|^p}{{h(r)}^{p\al}}+ \frac{\psi^p|H|^p}{p^p{h(r)}^{p\al}}] + O(\de^{k-\ga}).
\end{eqnarray*}
Since $k-\ga>0$, taking $\de\to 0$, Theorem \ref{teo-sob} follows.
\end{proof}

As a corollary, we have the weighted Hoffman-Spruck type inequality for submanifolds in Cartan-Hadamard manifolds.
\begin{corollary} Assume $\bar M$ is a Cartan-Hadamard manifold. We fix any $\xi_0\in \bar M$ and let $r=r_{\xi_0}=d_{\bar M}(\cdot\,,\xi_0)$. Let $1\le p<k$ and $-\infty<\al<\frac{k-p}{p}$. Then, for all $\psi\in C^1(M)$, with $\psi=0$ on $\p M$, it holds
\begin{eqnarray*}
\frac{1}{S}[\int_M\frac{|\psi|^{p^*}}{{r}^{p^*\al}}]^{p/p^*} + \Phi_{k,p,\al}\int_M \frac{|\psi|^p|\bar\na r^\perp|^2}{{r}^{p(\al+1)}} +  \De_{k,p,\al} \int_M \frac{|\psi|^p|\bar\na r^\perp|^p}{{r}^{p(\al+1)}} 
\\&& \hspace{-4.5cm} \le \Ga_{k,p,\al} \int_M[\frac{|\na^M\psi|^p}{{r}^{p\al}}+ \frac{|\psi|^p|H|^p}{p^p{r}^{p\al}}],
\end{eqnarray*}
Here, $p^*=\frac{kp}{k-p}$, $S=S_{k,p}>0$ depends only on $k$ and $p$, and $\Ga_{k,p\al}$, $\Phi_{k,p,\al}$ and $\De_{k,p,\al}$ are defined as in Theorem \ref{teo-sob}, with $h'(r_0)=1$.
\end{corollary}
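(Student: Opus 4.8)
The plan is to deduce the corollary as the flat ($\m K\equiv 0$) specialization of Theorem \ref{teo-sob}. Since $\bar M$ is Cartan-Hadamard, its sectional curvatures are nonpositive; in particular the radial sectional curvature obeys $(\bar K_{\rad})_{\xi_0}\le 0$, so the curvature hypothesis of Theorem \ref{teo-sob} holds with the comparison function $\m K\equiv 0$. The whole argument then reduces to verifying the remaining hypotheses for this choice and reading off the resulting constants.

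First I would record what $\m K\equiv 0$ forces. By the $b=0$ case of Example \ref{mK-constant}, the solution of the Cauchy problem \eqref{cauchy-h} is $h(t)=t$, so $h'(t)\equiv 1$ and $\bar r_0(\m K)=+\infty$. By Example \ref{cartan-hadamard-example}, a Cartan-Hadamard ambient satisfies $\bar r_0(\m K)=Inj_{\bar M}(\xi_0)=Inj_{\bar M}(M)=+\infty$; hence $r_0=\min\{\bar r_0(\m K),Inj_{\bar M}(\xi_0)\}=+\infty$, the ball $\m B_{r_0}(\xi_0)$ is all of $\bar M$, and any compact $M$ lies inside it. Moreover the admissible threshold in Proposition \ref{HS-compact} is $\m D=+\infty$, so the Hoffman-Spruck step underlying Theorem \ref{teo-sob} applies with no volume restriction on $M$.

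Next I would match the parameter ranges. Writing $\ga=p(\al+1)$ as in Theorem \ref{teo-sob}, the Hardy inequality (Theorem \ref{teo-hardy}) invoked there requires $\ga<k$, which is exactly the hypothesis $\al<\frac{k-p}{p}$ of the corollary, while $1\le p<k$ keeps $p^*=\frac{kp}{k-p}$ positive. With all hypotheses in force, it remains only to substitute $h(r)=r$ and $h'(r)=h'(r_0)=1$ into the conclusion of Theorem \ref{teo-sob}: every weight $h(r)^{\bullet}$ becomes $r^{\bullet}$, each factor $h'(r)$ collapses to $1$, and the constants $\Ga_{k,p,\al}$, $\Phi_{k,p,\al}$, $\De_{k,p,\al}$ are precisely those of Theorem \ref{teo-sob} evaluated at $h'(r_0)=1$. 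This reproduces the displayed inequality.

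I expect no genuine obstacle, since this is a clean specialization rather than a new estimate; the only point deserving an explicit remark is the formally infinite value $r_0=+\infty$. This causes no difficulty, because in the flat case $h'\equiv 1$, so the symbol $h'(r_0)$ entering $\Ga_{k,p,\al}$, $\Phi_{k,p,\al}$ and $\De_{k,p,\al}$ equals $1$ with no limiting process required, and the containment $M\subset\m B_{r_0}(\xi_0)=\bar M$ is automatic. Hence the transfer from Theorem \ref{teo-sob} is immediate.
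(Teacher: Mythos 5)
Your proposal is correct and is exactly the argument the paper intends (it states this as an immediate corollary without a written proof): specialize Theorem \ref{teo-sob} to $\m K\equiv 0$, so $h(t)=t$, $h'\equiv 1$, and use Example \ref{cartan-hadamard-example} to get $\bar r_0(\m K)=Inj_{\bar M}(\xi_0)=+\infty$ and $\m D=+\infty$, which removes the volume restriction and yields the stated constants with $h'(r_0)=1$. Your verification that $\ga=p(\al+1)<k$ matches the hypothesis $\al<\frac{k-p}{p}$ is the right parameter check, and nothing further is needed.
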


\section{The Caffarelli-Kohn-Nirenberg inequality for submanifolds}

Inspired by an argument in Bazan and Neves \cite{BN}, we will obtain the  Caffarelli-Kohn-Nirenberg type inequality for submanifolds (see Theorem \ref{CKN-ineq-teo} below) by interpolating Theorem \ref{teo-hardy} and Theorem \ref{teo-sob}. In order to do that, first, we will test the interpolation argument  to prove a particular case of our  Caffarelli-Kohn-Inequality type inequality (compare with Theorem \ref{teo-sob} above). We prove the following. 
\begin{theorem}\label{sob-weighted teo} Fixed  $\xi_0\in \bar M$, assume $(\bar K_{\rad})_{\xi_0}\le \m K(r)$, where $r=d_{\bar M}(\cdot\,,\xi_0)$. 
Assume $M$ is contained in $\m B=\m B_{r_0}(\xi_0)$, with $r_0=\min\{\bar r_0(\m K), Inj_{\bar M}(\xi_0)\}$. Let $1\le p<k$ and $-\infty<\al<\frac{k-p}{p}$. Let $s>0$ and $\al\le \ga\le \al+1$ satisfying the balance condition: $$\frac{1}{s}=\frac{1}{p}-\frac{(\al+1)-\ga}{k}=\frac{1}{p^*}+\frac{\ga-\al}{k}.$$ We write $s=(1-c)p + c p^*$, for some $c\in [0,1]$. Then, for all $\psi\in C^1(M)$, with $\psi=0$ on $\p M$, it holds
\begin{equation*}
[\int_M \frac{|\psi|^s}{{h(r)}^{s\ga}}]^{\frac{p}{s}}\le (\frac{\La}{h'(r_0)})^{\frac{p(1-c)}{s}}(S\,\Ga)^{\frac{p^*c}{s}}\int_M [\frac{|\na^M\psi|^p}{{h(r)}^{p\al}}+\frac{|\psi|^p|H|^p}{p^p{h(r)}^{p\al}}],
\end{equation*}
provided either $k<7$ or $\vol(M)<\m D$, being $0<\m D\le +\infty$ a constant depending only on $r_0$ and $Inj_{\bar M}(M)$. Here, $S>0$ is a constant depending only on $k$ and $p$, and  \begin{eqnarray*}
\La&=& \max\{1,2^{\frac{p-2}{2}}\} \frac{p^p h'(r_0)^{-p}}{[k-p(\al+1)]^p},\\
\Ga&=&  \max\{1,2^{\frac{p-2}{2}}\} h'(r_0)^{1-p}\big[h'(r_0)^\frac{2(p-1)}{p+2}+|\al|^{\frac{2p}{2+p}}2^\frac{|p-2|}{(p+2)}(\frac{p}{k-p(\al+1)})^\frac{2p}{p+2}\,\big]^{\frac{p+2}{2}}.
\end{eqnarray*}
\end{theorem}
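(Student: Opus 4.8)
The plan is to derive the weighted $L^s$ estimate by Hölder interpolation between two inequalities already at our disposal: the Hardy inequality of Theorem~\ref{teo-hardy}, which controls the weighted $L^p$ quantity $\int_M |\psi|^p h'(r)/h(r)^{p(\al+1)}$, and the weighted Hoffman--Spruck inequality of Theorem~\ref{teo-sob}, which controls the weighted $L^{p^*}$ quantity $\int_M |\psi|^{p^*}/h(r)^{p^*\al}$. The balance condition is exactly what guarantees that these two weights interpolate onto the target weight $h(r)^{-s\ga}$ and that the exponent of the common right-hand integral $\int_M[\,|\na^M\psi|^p/h(r)^{p\al}+|\psi|^p|H|^p/(p^p h(r)^{p\al})\,]$ collapses to $1$ at the end.

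First I would record the two inputs in usable form. Applying Theorem~\ref{teo-hardy} with $p(\al+1)$ playing the role of $\ga$ there (legitimate since $p(\al+1)<k$ by the hypothesis $\al<(k-p)/p$), discarding the nonnegative $|\bar\na r^\perp|^2$ term and the boundary term (the latter vanishes because $\psi=0$ on $\p M$), and using $h'(r_0)\le h'(0)=1$, I obtain
\[ \int_M \frac{|\psi|^p h'(r)}{h(r)^{p(\al+1)}}\ \le\ \La \int_M\Big[\frac{|\na^M\psi|^p}{h(r)^{p\al}}+\frac{|\psi|^p|H|^p}{p^p h(r)^{p\al}}\Big]. \]
Similarly, dropping the nonnegative $\Phi_{k,p,\al}$- and $\De_{k,p,\al}$-terms on the left of Theorem~\ref{teo-sob}, whose gradient constant $\Ga_{k,p,\al}$ coincides with the $\Ga$ of the present statement once one sets $\ga=p(\al+1)$ (so that $k-\ga=k-p(\al+1)$), yields
\[ \Big[\int_M \frac{|\psi|^{p^*}}{h(r)^{p^*\al}}\Big]^{p/p^*}\ \le\ S\,\Ga\int_M\Big[\frac{|\na^M\psi|^p}{h(r)^{p\al}}+\frac{|\psi|^p|H|^p}{p^p h(r)^{p\al}}\Big]. \]

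The interpolation is the core of the argument. Since $\al\le\ga\le\al+1$, the balance condition forces $p\le s\le p^*$, hence $c\in[0,1]$ in $s=(1-c)p+cp^*$. I would then factor the integrand pointwise as
\[ \frac{|\psi|^s}{h(r)^{s\ga}}\ =\ h'(r)^{-(1-c)}\Big(\frac{|\psi|^p h'(r)}{h(r)^{p(\al+1)}}\Big)^{1-c}\Big(\frac{|\psi|^{p^*}}{h(r)^{p^*\al}}\Big)^{c}, \]
the matching of the $|\psi|$- and $h(r)$-exponents being precisely $s=(1-c)p+cp^*$ and $s\ga=(1-c)p(\al+1)+cp^*\al$, the second of which is equivalent to the balance condition. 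Bounding $h'(r)^{-(1-c)}\le h'(r_0)^{-(1-c)}$ by monotonicity of $h'$, applying Hölder with the conjugate pair $(1/(1-c),1/c)$, and inserting the two inputs above produces
\[ \int_M \frac{|\psi|^s}{h(r)^{s\ga}}\ \le\ h'(r_0)^{-(1-c)}\,\La^{1-c}\,(S\Ga)^{cp^*/p}\Big(\int_M\Big[\frac{|\na^M\psi|^p}{h(r)^{p\al}}+\frac{|\psi|^p|H|^p}{p^p h(r)^{p\al}}\Big]\Big)^{(1-c)+cp^*/p}. \]
Raising this to the power $p/s$ turns the exponent $(1-c)+cp^*/p$ of the Dirichlet integral into $((1-c)p+cp^*)/s=1$ and collects the constant into $(\La/h'(r_0))^{p(1-c)/s}(S\Ga)^{p^*c/s}$, which is exactly the asserted bound.

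I expect the difficulties to be of a bookkeeping rather than a conceptual nature. The first is the algebra of the exponents: one must check that, together with $s=(1-c)p+cp^*$, the identity $s\ga=(1-c)p(\al+1)+cp^*\al$ is equivalent to $\frac1s=\frac1p-\frac{(\al+1)-\ga}{k}=\frac1{p^*}+\frac{\ga-\al}{k}$, which follows at once from $\frac1{p^*}=\frac1p-\frac1k$. The second is the finiteness of the weighted integrals when $\xi_0\in M$: near $\xi_0$ one has $h(r)\sim r$ and $\vol_M(\{\rho<\de\})=O(\de^k)$, and the exponents $p(\al+1)$, $p^*\al$ and $s\ga$ are all strictly below $k$ precisely because $\al<(k-p)/p$, so every integral converges and Theorems~\ref{teo-hardy} and~\ref{teo-sob} may be invoked verbatim as black boxes, with no separate cut-off argument required here. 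The only genuine point to watch is the sign of the discarded lower-order terms, which are nonnegative exactly when $p(\al+1)\ge0$, i.e.\ $\al\ge-1$; for more negative $\al$ these terms must be controlled separately (using $|\bar\na r^\perp|^2\le1$) before they can be removed, which affects only the final constant.
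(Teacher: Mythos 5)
Your proposal is correct and follows essentially the same route as the paper: a pointwise factorization $\frac{|\psi|^s}{h(r)^{s\ga}}=h'(r)^{-(1-c)}\big(\frac{|\psi|^p h'(r)}{h(r)^{p(\al+1)}}\big)^{1-c}\big(\frac{|\psi|^{p^*}}{h(r)^{p^*\al}}\big)^{c}$ justified by the balance condition, followed by H\"older with exponents $\frac{1}{1-c},\frac{1}{c}$, the bound $h'(r)\ge h'(r_0)$, and Theorems \ref{teo-hardy} and \ref{teo-sob} applied to the two factors, with the exponent of the Dirichlet integral collapsing to $1$ after raising to the power $p/s$. Your closing caveat about the sign of the discarded $|\bar\na r^\perp|^2$-terms when $\al<-1$ is a legitimate point that the paper's own proof passes over silently.
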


\begin{proof} We write $\ga=(1-\te)(\al+1)+\te \al$, for some $\te\in [0,1]$. Since $s=(1-c)p+cp^*$, with $c\in [0,1]$, by the balance condition, it holds $$c=\frac{\te p}{\te p+ (1-\te)p^*}=\frac{\te(k-p)}{k-\te p},$$ which implies $\te(1-c)p=(1-\te)c p^*$. Hence,  after a straightforward computation, one has $s\ga = p(1-c)(\al+1) + p^* c\al$. By the H\"{o}lder inequality,
\begin{eqnarray}
\int_M\frac{\psi^s}{{h(r)}^{s\ga}}&=&\int_M\frac{\psi^{p(1-c) + p^*c}}{{h(r)}^{p(1-c)(\al+1) + p^*c\al}} = \int_M\frac{\psi^{p(1-c)}}{{h(r)}^{p(1-c)(\al+1)}}\frac{\psi^{ p^*c}}{{h(r)}^{p^*{c\al}}} 
\nonumber\\&\leq& [\int_M(\frac{\psi^{p(1-c)}}{{h(r)}^{p(1-c)(\al+1)}})^{\frac{1}{1-c}}]^{1-c}[\int_\Si(\frac{\psi^{ p^*c}}{{h(r)}^{p^*c\al}})^\frac{1}{c}]^c 
\nonumber\\&\le& [\frac{1}{{h'(r_0)}} \int_M\frac{\psi^{p}h'(r)}{{h(r)}^{p(\al+1)}}]^{1-c}[\int_M\frac{|\psi|^{ p^*}}{{h(r)}^{p^* \al}}]^c.\nonumber
\end{eqnarray}
The last inequality holds since $1=h'(0)\ge h'(r)\ge h'(r_0)$. 
Thus, using Theorem \ref{teo-hardy} and Theorem \ref{teo-sob}, 
\begin{eqnarray*} [\int_M\frac{\psi^s}{{h(r)}^{s\ga}}]^{\frac{p}{s}} &\leq& [\La\int_M (\frac{|\na^M \psi|^p}{{h(r)}^{p\al}}+\frac{\psi^p|H|^p}{p^p{h(r)}^{p\al}})]^{\frac{p(1-c)}{s}} \times
 \nonumber\\&& \hspace{2cm}\times \, [S\,\Ga\int_M (\frac{|\na^M \psi|^p}{{h(r)}^{p\al}}+\frac{\psi^p|H|^p}{{h(r)}^{p\al}})]^{\frac{p^*c}{s}} 
\nonumber \\ &\leq& \La^{\frac{p(1-c)}{s}}(S\,\Ga)^{\frac{p^*c}{s}}\int_M(\frac{|\na^M \psi|^p}{{h(r)}^{p\al}}+\frac{\psi^p|H|^p}{{h(r)}^{p\al}}),\nonumber
\end{eqnarray*}
where $\La=\La_{k,p,\al}=\max\{1,2^{\frac{p-2}{2}}\} \frac{p^p h'(r_0)^{-p}}{[k-p(\al+1)]^p}$ and $\Ga=\Ga_{k,p,\al}$ is given as in Theorem \ref{teo-sob}. 
\end{proof}

Now, we will state our Caffarelli-Kohn-Nirenberg type inequality for submanifolds.

\begin{theorem}\label{CKN-ineq-teo} Fixed  $\xi_0\in \bar M$, assume $(\bar K_{\rad})_{\xi_0}\le \m K(r)$, where $r=d_{\bar M}(\cdot\,,\xi_0)$. 
Assume $M$ is contained in $\m B=\m B_{r_0}(\xi_0)$, with $r_0=\min\{\bar r_0(\m K), Inj_{\bar M}(\xi_0)\}$.  Let $1\le p<k$ and $-\infty<\al<\frac{k-p}{p}$. Furthermore, let $q>0,t>0$ and  $\be,\ga,\si$  satisfying
\begin{enumerate}[(i)]
\item\label{convex} $\ga$ is a convex combination, $\ga=a\si + (1-a)\be$, for some $a\in [0,1]$ and $\al\le \si\le \al+1$;
\item\label{balance1} Balance condition: $\frac{1}{t}-\frac{\ga}{k}=a(\frac{1}{p}-\frac{\al+1}{k})+(1-a)(\frac{1}{q}-\frac{\be}{k})$.
\end{enumerate}
Then, for all $\psi\in C^1(M)$, with $\psi=0$ on $\p M$, it holds
\begin{equation}\label{CKN-inequality}
[\int_M \frac{|\psi|^t}{{h(r)}^{\ga t}}]^\frac{1}{t} \le  C\big[\int_M (\frac{|\na^M \psi|^p}{{h(r)}^{\al p}} + \frac{|\psi|^p|H|^p}{{h(r)}^{\al p}})\big]^{\frac{a}{p}}[\int_M \frac{|\psi|^q}{{h(r)}^{\be q}}]^{\frac{1-a}{q}},
\end{equation}
provided either $k<7$ or $\vol(M)<\m D$, being $0<\m D\le +\infty$ a constant depending only on $r_0$ and $Inj_{\bar M}(M)$. Here, 
$$C=(\frac{\La}{h'(r_0)})^{\frac{p(1-c)}{s}}(S\,\Ga)^{\frac{p^*c}{s}},$$ where $c\in [0,1]$ and $s\in [p,p^*]$ depend only on the parameters $p,k,\al$ and $\si$, $S$ depends only on $k$ and $p$, and $\La$ and $\Ga$ are defined as in Theorem \ref{sob-weighted teo}. 
\end{theorem}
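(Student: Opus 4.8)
The plan is to obtain \eqref{CKN-inequality} by a single application of the Hölder inequality that interpolates between the weighted Sobolev estimate of Theorem \ref{sob-weighted teo}---which already encodes the interpolation of the Hardy inequality (Theorem \ref{teo-hardy}) and the Hoffman--Spruck inequality (Theorem \ref{teo-sob})---and the reference weighted norm $\int_M |\psi|^q/h(r)^{\be q}$ appearing on the right-hand side. The parameter $\si\in[\al,\al+1]$ from hypothesis (\ref{convex}) serves precisely to fix the interpolation exponent in Theorem \ref{sob-weighted teo}.

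First I would use $\si$ to define the auxiliary exponent $s$ through the balance relation of Theorem \ref{sob-weighted teo}, namely $\tfrac{1}{s}=\tfrac{1}{p}-\tfrac{(\al+1)-\si}{k}$; since $\al\le\si\le\al+1$ this forces $s\in[p,p^*]$, and writing $s=(1-c)p+cp^*$ determines $c\in[0,1]$, both depending only on $k,p,\al,\si$. Applying Theorem \ref{sob-weighted teo} with $\si$ in the role of its $\ga$ gives
$$\Big[\int_M \frac{|\psi|^s}{h(r)^{s\si}}\Big]^{p/s}\le C\int_M \Big[\frac{|\na^M\psi|^p}{h(r)^{p\al}}+\frac{|\psi|^p|H|^p}{p^p\,h(r)^{p\al}}\Big],$$
with $C=(\La/h'(r_0))^{p(1-c)/s}(S\,\Ga)^{p^*c/s}$, exactly the constant displayed in the statement.

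Next, using the convex decomposition $\ga=a\si+(1-a)\be$ I would split the integrand on the left of \eqref{CKN-inequality} as $\frac{|\psi|^t}{h(r)^{\ga t}}=\frac{|\psi|^{at}}{h(r)^{a\si t}}\cdot\frac{|\psi|^{(1-a)t}}{h(r)^{(1-a)\be t}}$ and apply Hölder with conjugate exponents $p_1=s/(at)$ and $p_2=q/((1-a)t)$. The first factor then becomes a power of the $\si$-weighted $L^s$ norm and the second a power of the $\be$-weighted $L^q$ norm; the conjugacy $1/p_1+1/p_2=1$ is equivalent to $1/t=a/s+(1-a)/q$, which---after rewriting $\tfrac{1}{p}-\tfrac{\al+1}{k}=\tfrac{1}{s}-\tfrac{\si}{k}$ via the definition of $s$---is exactly the balance condition (\ref{balance1}). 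This yields
$$\Big[\int_M\frac{|\psi|^t}{h(r)^{\ga t}}\Big]^{1/t}\le\Big[\int_M\frac{|\psi|^s}{h(r)^{s\si}}\Big]^{a/s}\Big[\int_M\frac{|\psi|^q}{h(r)^{\be q}}\Big]^{(1-a)/q}.$$

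Finally I would substitute the bound from Theorem \ref{sob-weighted teo} into the first factor (raised to the power $a$), converting it into the gradient-and-mean-curvature integral to the power $a/p$ and supplying the explicit constant of the statement, which gives \eqref{CKN-inequality}; the endpoint cases $a=0$ (where the inequality is trivial) and $a=1$ (where it reduces to Theorem \ref{sob-weighted teo}) are treated separately. I do not anticipate a genuine obstacle, since the analytic content lives entirely in Theorem \ref{sob-weighted teo}. The one delicate point is purely algebraic: one must verify exactly, not merely up to estimate, that hypotheses (\ref{convex})--(\ref{balance1}) produce both the weight-splitting identity $\ga t=a\si t+(1-a)\be t$ and the Hölder conjugacy $1/p_1+1/p_2=1$, and that the admissibility $p_1,p_2\ge 1$ holds for $a\in(0,1)$; this reconciliation of the two balance conditions is the step that must be carried out precisely.
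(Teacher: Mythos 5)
Your proposal is correct and follows essentially the same route as the paper: define $s$ from $\si$ via the Sobolev balance relation, apply Theorem \ref{sob-weighted teo} at exponent $s$ with weight $\si$, and interpolate against the $\be$-weighted $L^q$ norm by H\"older using the identity $1/t=a/s+(1-a)/q$ derived from the balance condition. Your direct splitting with exponents $s/(at)$ and $q/((1-a)t)$ is the same as the paper's (its auxiliary $b$ satisfies $bs=at$ and $(1-b)q=(1-a)t$), just parameterized so that the weight identity $\ga t=a\si t+(1-a)\be t$ is immediate.
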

\begin{proof} 

If $a=1$ then $\al\le \ga=\si \le \al+1$ and $\frac{1}{t}=\frac{1}{p}-\frac{(\al+1)-\ga}{k}=\frac{1}{p^*}+\frac{\ga-\al}{k}$, in particular, $p\le t\le p^*$. Thus, Theorem \ref{CKN-ineq-teo} follows  from Theorem \ref{sob-weighted teo}. If $a=0$ then $\ga=\be$ and $q=t$, hence there is nothing to do. From now on, we will assume $0<a<1$. 

By \ref{convex} and \ref{balance1}, we obtain
\begin{eqnarray}\label{eq-1/r}
\frac{1}{t} &=& \frac{\ga}{k} + a(\frac{1}{p}-\frac{\al+1}{k})+(1-a)(\frac{1}{q}-\frac{\be}{k})
\nonumber\\&=& \frac{a\si + (1-a)\be}{k} + a(\frac{1}{p}-\frac{\al+1}{k})+(1-a)(\frac{1}{q}-\frac{\be}{k})
\nonumber\\&=& a(\frac{1}{p}-\frac{(\al+1)-\si}{k}) + \frac{1-a}{q}
\nonumber\\&=& \frac{a}{s} + \frac{1-a}{q},
\end{eqnarray} 
where $\frac{1}{s}=\frac{1}{p}-\frac{(\al+1)-\si}{k}=\frac{1}{p^*} + \frac{\si-\al}{k}$. Hence, $s=\frac{kp}{k-p[(\al+1)-\si]}\in [p,p^*]$. 

We write 
\begin{equation}\label{exp-r}
t=(1-b)q+bs. 
\end{equation}
If $s=q$, we take $b=a$. If $s\neq q$ then, by (\ref{eq-1/r}),  $t=q+b(s-q) = (\frac{a}{s} + \frac{1-a}{q})^{-1}=\frac{sq}{aq+(1-a)s}$. Hence,
\begin{eqnarray}\label{exp-b}
b&=& \frac{r-q}{s-q} = (\frac{sq}{aq+(1-a)s}-q)\frac{1}{s-q} = (\frac{sq -q(aq+(1-a)s)}{aq+(1-a)s})\frac{1}{s-q}
\nonumber\\&=& (\frac{sq - aq^2 - qs + asq}{aq+(1-a)s})\frac{1}{s-q} = (\frac{ s-q}{aq+(1-a)s})\frac{aq}{s-q} 
\nonumber\\&=& \frac{aq}{aq+(1-a)s}.
\end{eqnarray}
Thus, $b=\frac{aq}{aq+(1-a)s}\in [0,1]$, independently whether $s=q$ or not. In particular,  $(1-b)=\frac{(1-a)s}{aq+(1-a)s}$, hence $(1-b)aq + (1-b)(1-a)s = (1-a)s$, 
which implies,
\begin{equation}\label{1-b-eq}
(1-b)aq=(1-a)bs.
\end{equation}
Thus, it holds 
\begin{eqnarray}\label{exp-ga-b}
\ga t &=& (a\si+(1-a)\be)((1-b)q+bs) 
\nonumber\\&=& [(1-b)aq]\si + abs\si + (1-a)(1-b)q\be + [(1-a)b s] \be
\nonumber\\&=& [(1-a)bs]\si + abs\si + (1-a)(1-b)q\be + [(1-b)aq]\be
\nonumber\\&=& bs\si + (1-b)q\be. 
\end{eqnarray}
By (\ref{exp-r}) and (\ref{exp-ga-b}),
\begin{eqnarray}\label{CKN-est-sg}
[\int_M \frac{|\psi|^t}{{h(r)}^{\ga t}}]^\frac{1}{t} &=& [\int_M \frac{|\psi|^{(1-b)q+bs}}{{h(r)}^{(1-b)q\be+b s\si}}]^\frac{1}{t} = [\int_M \frac{|\psi|^{bs}}{{h(r)}^{b s\si}}\frac{|\psi|^{(1-b)q}}{{h(r)}^{(1-b)q\be}}]^\frac{1}{t}
\nonumber\\&\le& [\int_M (\frac{|\psi|^{bs}}{{h(r)}^{b s\si}})^\frac{1}{b}]^\frac{b}{t}[\int_M(\frac{|\psi|^{(1-b)q}}{|X|^{(1-b)q\be}})^\frac{1}{1-b}]^\frac{1-b}{t}
\nonumber\\&& \hspace{-1.2cm}= \ [\int_M \frac{|\psi|^{s}}{{h(r)}^{s\si}}]^\frac{b}{t}[\int_M\frac{|\psi|^{q}}{{h(r)}^{q\be}}]^\frac{1-b}{t}
= [\int_M \frac{|\psi|^{s}}{{h(r)}^{s\si}}]^\frac{a}{s}[\int_M\frac{|\psi|^{q}}{{h(r)}^{q\be}}]^\frac{1-a}{q}.
\end{eqnarray}
The last equality holds since, by (\ref{eq-1/r}) and (\ref{exp-b}), we obtain $\frac{b}{r}=(\frac{aq}{aq+(1-a)s})(\frac{a}{s} + \frac{1-a}{q})=\frac{a}{s}$ and $\frac{1-b}{r}=\frac{1-a}{q}$. 

Now, since $p\le s\le p^*$ satisfies,  $\frac{1}{s}=\frac{1}{p}-\frac{(\al+1)-\si}{k}$, the balance condition holds:
\begin{equation*}
\frac{1}{s}+\frac{\si}{k}=\frac{1}{p}-\frac{\al+1}{k}.
\end{equation*}
Write $s=(1-c)p+cp^*$, with $c\in [0,1]$. By Theorem \ref{sob-weighted teo}, 
\begin{equation*}
 [\int_M \frac{|\psi|^{s}}{{h(r)}^{s\si}}]^\frac{1}{s} \le C \big[\int_M (\frac{|\na^M \psi|^p}{{h(r)}^{\al p}} + \frac{|\psi|^p|H|^p}{{h(r)}^{\al p}})\big]^{\frac{1}{p}}, 
\end{equation*}
where $C$ is given as in Theorem \ref{sob-weighted teo}. Theorem \ref{CKN-ineq-teo} is proved.
\end{proof}

As a corollary, we have the Caffarelli-Kohn-Nirenberg type inequality for submanifolds in Cartan-Hadamard manifolds.
\begin{corollary}\label{CKN-CH-cor} Assume $\bar M$ is a Cartan-Hadamard manifold. We fix any $\xi_0\in \bar M$ and let $r=d_{\bar M}(\cdot\,,\xi_0)$. Let $1\le p<k$ and $-\infty<\al<\frac{k-p}{p}$. Furthermore, let $q>0,t>0$ and  $\be,\ga,\si$  satisfying
\begin{enumerate}[(i)]
\item\label{convex} $\ga$ is a convex combination, $\ga=a\si + (1-a)\be$, for some $a\in [0,1]$ and $\al\le \si\le \al+1$;
\item\label{balance1} Balance condition: $\frac{1}{t}-\frac{\ga}{k}=a(\frac{1}{p}-\frac{\al+1}{k})+(1-a)(\frac{1}{q}-\frac{\be}{k})$.
\end{enumerate}
Then, for all $\psi\in C^1(M)$, with $\psi=0$ on $\p M$, it holds
\begin{equation}\label{CKN-inequality}
[\int_M \frac{|\psi|^t}{{r}^{\ga t}}]^\frac{1}{t} \le  C\big[\int_M (\frac{|\na^M \psi|^p}{{r}^{\al p}} + \frac{|\psi|^p|H|^p}{{r}^{\al p}})\big]^{\frac{a}{p}}[\int_M \frac{|\psi|^q}{{r}^{\be q}}]^{\frac{1-a}{q}}.
\end{equation}
Here, 
$$C=\La^{\frac{p(1-c)}{s}}(S\,\Ga)^{\frac{p^*c}{s}},$$ where $c\in [0,1]$ and $s\in [p,p^*]$ depend only on the parameters $p,k,\al$ and $\si$, $S$ depends only on $k$ and $p$, and 
\begin{eqnarray*}
\La&=& \max\{1,2^{\frac{p-2}{2}}\} \frac{p^p}{[k-p(\al+1)]^p},\\
\Ga&=&  \max\{1,2^{\frac{p-2}{2}}\}\big[1+|\al|^{\frac{2p}{2+p}}2^\frac{|p-2|}{(p+2)}(\frac{p}{k-p(\al+1)})^\frac{2p}{p+2}\,\big]^{\frac{p+2}{2}}.
\end{eqnarray*}
\end{corollary}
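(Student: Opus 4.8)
The plan is to deduce the corollary directly from Theorem \ref{CKN-ineq-teo}, specializing the comparison data to the flat case $\m K \equiv 0$; no new estimate is needed. First I would observe that, since $\bar M$ is Cartan-Hadamard, all of its sectional curvatures are nonpositive, so for the fixed pole $\xi_0$ one has $(\bar K_{\rad})_{\xi_0} \le 0 = \m K(r)$ with the choice $\m K \equiv 0$. This is exactly the curvature hypothesis demanded by Theorem \ref{CKN-ineq-teo}, and the convexity/balance conditions \eqref{convex}--\eqref{balance1} on $\al,\be,\ga,\si,q,t$ are identical in both statements, so they transfer verbatim.

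Next I would read off the auxiliary quantities attached to $\m K \equiv 0$. By the case $b=0$ in Example \ref{mK-constant}, the solution of the Cauchy problem (\ref{cauchy-h}) is $h(t)=t$, whence $\bar r_0(\m K)=+\infty$ and, crucially, $h'(r)\equiv 1$ on $[0,+\infty)$; in particular $h'(r_0)=1$. Combining this with Example \ref{cartan-hadamard-example}, which gives $Inj_{\bar M}(\xi_0)=Inj_{\bar M}(M)=+\infty$ and hence $\m D=+\infty$, two things follow at once. The radius $r_0=\min\{\bar r_0(\m K),\,Inj_{\bar M}(\xi_0)\}=+\infty$, so that $\m B_{r_0}(\xi_0)=\bar M$ and the containment hypothesis $M\subset \m B_{r_0}(\xi_0)$ of Theorem \ref{CKN-ineq-teo} holds for every compact $M$; and the side condition ``$k<7$ or $\vol(M)<\m D$'' is satisfied vacuously, since $\m D=+\infty$.

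With these substitutions the conclusion of Theorem \ref{CKN-ineq-teo} becomes precisely the asserted inequality. Replacing $h(r)$ by $r$ turns every weight $h(r)^{-\ga t}$, $h(r)^{-\al p}$, $h(r)^{-\be q}$ into the corresponding power of $r$, while $h'(r_0)=1$ collapses the constant $C=(\La/h'(r_0))^{p(1-c)/s}(S\,\Ga)^{p^* c/s}$ into $C=\La^{p(1-c)/s}(S\,\Ga)^{p^* c/s}$. The same substitution $h'(r_0)=1$ simplifies the formulas for $\La$ and $\Ga$ from Theorem \ref{sob-weighted teo}: the factors $h'(r_0)^{-p}$, $h'(r_0)^{1-p}$ and $h'(r_0)^{2(p-1)/(p+2)}$ all become $1$, reducing $\La$ and $\Ga$ to the displayed expressions in the corollary. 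The parameters $c\in[0,1]$ and $s\in[p,p^*]$, which depend only on $p,k,\al,\si$, are unaffected by the ambient geometry, so they carry over unchanged.

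Since every step is a verification that the Cartan-Hadamard hypothesis forces the degenerate comparison case rather than a fresh analytic argument, there is no genuine obstacle. The only point requiring a little care is checking that the four limiting values $\bar r_0(\m K)=Inj_{\bar M}(\xi_0)=\m D=+\infty$ and $h'(r_0)=1$ are simultaneously admissible, so that the ball and volume provisos of Theorem \ref{CKN-ineq-teo} are automatically met for an arbitrary compact $M$; this is exactly what Example \ref{mK-constant} and Example \ref{cartan-hadamard-example} guarantee. I would therefore record the corollary as an immediate application of Theorem \ref{CKN-ineq-teo} with $\m K\equiv 0$.
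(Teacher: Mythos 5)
Your proposal is correct and matches the paper's intended derivation: the corollary is obtained from Theorem \ref{CKN-ineq-teo} by taking $\m K\equiv 0$, so that $h(t)=t$, $h'\equiv 1$, and $\bar r_0(\m K)=Inj_{\bar M}(\xi_0)=\m D=+\infty$, which makes the containment and volume provisos vacuous and collapses the constants exactly as you describe. No further comment is needed.
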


\begin{example} There are some inequalities that derive  from Theorem \ref{CKN-ineq-teo}. For the sake of simplicity, we assume $\bar M$ is a Cartan-Hadamard manifold. Fix any $\xi_0\in \bar M$ and let $r=d_{\bar M}(\cdot\,,\xi_0)$. By Corollary \ref{CKN-CH-cor}, there exists a constant $C$, depending only on the parameters $k,p,q,t,\ga,\al$ and $\be$, such that,  
for all $\psi\in C^1(M)$ with $\psi=0$ on $\p M$, the following inequality holds.
\\
\\
1 - The weighted Michael-Simon-Sobolev inequality (compare with Theorem \ref{teo-sob} and Theorem \ref{sob-weighted teo})  is obtained from Theorem \ref{CKN-ineq-teo} by taking $a=1$ (hence $\ga=\si$). In particular, if $a=1$ and $\al=0$ then, for all $\ga\in [0,1]$ and $t>0$ satisfying $\frac{1}{t}-\frac{\ga}{k}=\frac{1}{p*}$, it holds
\begin{equation*}
[\int_M \frac{|\psi|^t}{r^{\ga t}}]^\frac{p}{t} \le C \int_{M} (|\na^M\psi|^p + |\psi|^p| |H|^p),
\end{equation*}
\\
2 - Hardy type inequality for submanifolds (compare with  Theorem \ref{teo-hardy}). We take $a=1$  and $\ga=\al+1$. Hence, $\ga=\si$ and, by the balance condition, $t=p$. Thus, it holds 
\begin{equation*}
\int_M \frac{|\psi|^p}{{r}^{(\al+1)p}} \le C \int_{M} (\frac{|\na^M\psi|^p}{{r}^{p\al}} + \frac{|\psi|^p|\vec{H}|^p}{{r}^{p\al}}).
\end{equation*}
\\
3 - Galiardo-Nirenberg type inequality for submanifolds. We take $\al=\be=\si=0$. We obtain, $\ga=0$ and, for all $t>0$, satisfying $\frac{1}{t}=\frac{a}{p^*}+\frac{1-a}{q}$, with $a\in [0,1]$, it holds
\begin{equation*}
[\int_M |\psi|^t]^\frac{1}{t} \le  C\big[\int_M (|\na^M \psi|^p + |\psi|^p|H|^p)\big]^{\frac{a}{p}}[\int_M |\psi|^q]^{\frac{1-a}{q}}.
\end{equation*}
In particular, if we take $k\ge 3$, $p=2$, $q=1$, and $a={2}/(2+\frac{4}{k})$, then $\frac{1}{t}=\frac{2}{2+\frac{4}{k}} \frac{k-2}{2k} + \frac{4}{k}\frac{1}{2+\frac{4}{k}} = \frac{k-2}{2(k+2)}+\frac{2}{k+2} = \frac{1}{2}$, and the following  Nash type inequality for submanifolds holds 
\begin{equation*}
[\int_M |\psi|^2]^\frac{1}{2} \le  C\big[\int_M (|\na^M \psi|^2 + |\psi|^2|H|^2)\big]^{\frac{k}{2k+4}}[\int_M |\psi|]^{\frac{2}{k+2}}.
\end{equation*}
4 - Heisenberg-Pauli-Weyl type inequality for submanifolds. We consider $k\ge 3$ and take $t=2$, $p=q=2$, $\ga=\al=0$, $\be=-1$ and $a=\frac{1}{2}$. The parameter conditions in Theorem \ref{CKN-ineq-teo} are satisfied. So, we obtain 
\begin{equation*}
[\int_M |\psi|^2]^{\frac{1}{2}} \le C [\int_M (|\na^M \psi|^2 + |\psi|^2|H|^2)]^{\frac{1}{4}}[\int_M r^2|\psi|^2]^{\frac{1}{4}}.
\end{equation*}  
\end{example}

\section*{Aknowledgement}
The second author thanks his friends Wladimir Neves and Aldo Bazan for your suggestions and comments. 


\begin{thebibliography}{99}

\bibitem{BT} Badiale, M.; Tarantello, G. \emph{ A Sobolev-Hardy inequality with applications to a nonlinear elliptic equation arising in astrophysics}. Arch. Ration. Mech. Anal. 163 (2002), no. 4, 259 - 293.

\bibitem{BM} Batista, M.; Mirandola, H., {Sobolev and Isoperimetric inequalities for submanifolds in weighted ambient spaces.} Preprint arXiv 1304.2271, to appear in Annali di Matematica Pura ed Applicata.  

\bibitem{BN} Bazan, A.; Neves, W., {\it A scaling approach to Caffarelli-Kohn-Nirenberg inequality}. Preprint arXiv: 1314.1823. 

\bibitem{C}  Carron, G. \emph{In\'egalit\'es de Hardy sur les vari\'et\'es riemanniennes non-compactes. } J. Math. Pures Appl. (9) 76 (1997), no. 10, 883 - 891.

\bibitem{CKN}  Caffarelli, L.; Kohn, R.; Nirenberg, L. \emph{ First order interpolation inequalities with weights.} Compositio Math. 53 (1984), no. 3, 259 - 275.

\bibitem{CW}  Catrina, F.; Wang, Z. \emph{On the Caffarelli-Kohn-Nirenberg inequalities.} C. R. Acad. Sci. Paris SŽr. I Math. 330 (2000), no. 6, 437 - 442.

\bibitem{CC}  Catrina, F. ; Costa, D. G. \emph{Sharp weighted-norm inequalities for functions with compact support in $\real^N\backslash\{0\}$}. J. Differential Equations 246 (2009), no. 1, 164 - 182. 

\bibitem{dCX} do Carmo, M.; Xia, C. \emph{Complete manifolds with non-negative Ricci curvature and the Caffarelli - Kohn - Nirenberg inequalities.} Compos. Math.140 (2004) 818 - 826.

\bibitem{HS} Hoffman, D.; Spruck, J. \emph{Sobolev and isoperimetric inequalities for Riemannian submanifolds}. Comm. Pure Appl. Math. 27 (1974), 715 - 727.

\bibitem{MS} Michael, J. H.; Simon, L. M. \emph{Sobolev and mean-value inequalities on generalized submanifolds of $\real^n$}. Comm. Pure Appl. Math. 26 (1973), 361 - 379. 

\bibitem{KO} Kombe, I.; \"{O}zaydin, M.  \emph{Hardy-Poincar\'e, Rellich and uncertainty principle inequalities on Riemannian manifolds}. Trans. Amer. Math. Soc. 365 (2013), no. 10, 5035 - 5050.

\bibitem{L} Ledoux, M., \emph{On manifolds with non-negative Ricci curvature and Sobolev inequalities.}
Comm. Anal. Geom. 7 (1999), no. 2, 347 - 353.

\bibitem{PRS} Pigola, S., Rigoli, M. and Setti, A. G., {Vanishing and finiteness results in geometric analysis}, Progress in Mathematics, vol. 266, Birkh\"auser Verlag, Basel, A generalization of the Bochner technique, (2008).

\bibitem{W} White, B., \emph{Which ambient spaces admit isoperimetric inequalities for submanifolds?}, J. Diff. Geometry, 83 (2009), 213 -- 228. 


\bibitem{X} Xia, C. \emph{The Gagliardo - Nirenberg inequalities and manifolds of non-negative Ricci curvature}, J. Funct. Anal. 224 (2005) 230 - 241.


\end{thebibliography}
\end{document}